\begin{document}
\setlength{\baselineskip}{0.47cm}        

\newtheorem{theorem}{Theorem}
\newtheorem{problem}{Problem}
\newtheorem{lemma}{Lemma}
\newtheorem{proposition}{Proposition}
\newtheorem{remark}{Remark}
\newtheorem{property}{Property}
\newtheorem{definition}{Definition}
\newtheorem{corollary}{Corollary}
\newtheorem*{question}{Question}
\newtheorem{example}{Example}
\newtheorem{assumption}{Assumption}
\newtheorem*{notation}{Notation}
\newtheorem{claim}{Claim}

\title{Complements of graphs of meromorphic functions and complete vector fields}

\author{Alvaro Bustinduy}

\address{Departamento de Ingenier{\'\i}a Industrial \newline
         \indent Escuela Polit{\'e}cnica Superior \newline
         \indent Universidad Antonio de Nebrija \newline
         \indent C/ Pirineos 55, 28040 Madrid. Spain}
         \email{abustind@nebrija.es}

\thanks{2010 {\it Mathematics Subject Classification.} Primary 32H02, 32M25;
Secondary 32S65}
\thanks {{\it Key words and phrases.} Dominating map.  Dominable surfaces. Complete vector fields.}
\thanks{Supported by Spanish MICINN project MTM2011-26674-C02-02}

\begin{abstract}
Given a meromorphic function $s: \mathbb{C}\to \mathbb{P}^{1}$, we
obtain a family of fiber-preserving dominating holomorphic maps
from $\mathbb{C}^{2}$ onto $\mathbb{C}^{2}\setminus
\textnormal{graph}(s)$ defined in terms of the flows of complete
vector fields of type $\mathbb{C}^{\ast}$ and  of an entire
function $h:\mathbb{C}\to\mathbb{C}$ whose graph does not meet
$\textnormal{graph}(s)$, which was determined by Buzzard and Lu. In
particular, we prove that the dominating map constructed by these
authors to prove the dominability of $\mathbb{C}^{2}\setminus
\textnormal{graph}(s)$ is in the above family. We also study the
complement of a double section in $\mathbb{C}\times\mathbb{P}^{1}$
in terms of a complex flow. Moreover,  when $s$ has at most one
pole, we prove that there are infinitely many complete vector
fields tangent to $\textnormal{graph}(s)$, describing explicit
families of them with all their trajectories proper and of the same
type ($\mathbb{C}$ or $\mathbb{C}^{\ast}$), if
$\textnormal{graph}(s)$ does not contain zeros; and families with
almost all trajectories non-proper and of type $\mathbb{C}$, or of
type $\mathbb{C}^{\ast}$, if $\textnormal{graph}(s)$ contains
zeros. We also study the dominability of $\mathbb{C}^{2}\setminus
A$ when $A\subset \mathbb{C}^{2}$ is invariant by the flow of a complete
holomorphic vector field.
\end{abstract}

%
%

\maketitle \markright{}
\setcounter{tocdepth}{1}
\tableofcontents


%
%

\section{Introduction}

A complex manifold $M$ of dimension two (a surface) is (holomorphically)
dominable (by $\mathbb{C}^{2}$) if there is a holomorphic map
$f:\mathbb{C}^{2} \to M$ with Jacobian determinant not identically
zero. The map $f$ is called a dominating map. Note that, in general, $f$ might
be non-surjective. For instance, $M=\mathbb{C}^{2}$ and
$f(x,y)=(x,xy)$. As interesting property, one easily obtains that if
$M$ is dominable, there exists a holomorphic map from $\mathbb{C}$
to $M$ whose image is not contained in any complex subvariety of
dimension one of $M$, what is called a holomorphic image
of $\mathbb{C}$ in $M$ which is not analytically degenerated. In particular,
this property implies that $M$ can not be hyperbolic.

The study of surfaces dominable by $\mathbb{C}^{2}$ has been
developed in the last years by G. Buzzard and S. Lu, see \cite{BL}
and \cite{BL2} (references therein for classical results).
In many cases, the existence of a holomorphic
image of $\mathbb{C}$ in $M$ which is not analytically degenerated
implies the dominability of $M$ (see \cite[\S 3 and \S
4]{BL} for $M$ compact, and \cite[\S 5]{BL} for  $M$ non-compact
and algebraic).  In general, it is not easy to know if the
complement of a given analytic curve $C$ in a non-hyperbolic $M$
is dominable or not. Even if $M=\mathbb{C}^{2}$,
$\mathbb{C}\times\mathbb{P}^1$ or $\mathbb{P}^{2}$ the question is
not entirely solved. On the other hand, one sees that the study of dominability of
the complement in $\mathbb{P}^{2}$ of a smooth cubic \cite[\S
5.1]{BL}  (see \S 2), and in particular, the
complement in $\mathbb{C}^{2}$ of the graph of a meromorphic
function, are behind of the methods to construct explicit
dominating maps in the algebraic setting  \cite{BL}  and in other
contexts, as the complement in $\mathbb{C}\times \mathbb{P}^{1}$
of a double section \cite{BL2}.


In this paper, we use complete vector fields to study dominability problems in the two dimensional case.
We will recall some definitions (for more details, see \cite{Bustinduy-indiana}, \cite{Suzuki-anales}, and references therein).
Let $X$ be a holomorphic vector field on $M$.
Associated to $X$ there is a differential equation:
\begin{equation*}
\varphi'_{z}(t)=X(\varphi_z(t)),\,\,\,\,\,
\varphi_z(0)=z\in M,
\end{equation*}
whose local solutions $\varphi_z$ define the local flow of $X$ in
a neighbourhood of $(0,z)\in\mathbb{C}\times M$.
Given any point $z\in M$, the local solution
$\varphi_{z}$ can be extended by analytic continuation along paths
in $\mathbb{C}$, beginning from $t=0$, to a maximal connected Riemann surface $\pi_{z}: \Omega_z \to \mathbb{C}$, which is a Riemann
domain over $\mathbb{C}$. The map ${\varphi}_z:\Omega_z \to M$ is said to be a solution and its image $C_z$
is called the trajectory of $X$ through $z$. We say that a trajectory $C_z$ of $X$ is proper if its topological closure $\overline{C}_z$ in $M$
defines an analytic curve in $M$ of pure dimension
one.

If
$\Omega_z$ is equal to $\mathbb{C}$ (as domain in $\mathbb{C}$)  for all $z$ in $M$, we say that $X$ is complete. In this case,
each  trajectory $C_z$ is a Riemann surface uniformized by $\mathbb{C}$.
If $M$ is Stein (in particular if $M=\mathbb{C}^2$):

\begin{itemize}

\item [{$-$}] Any trajectory of $X$ is of type (= analytically isomorphic to) $\mathbb{C}$ or $\mathbb{C}^{\ast}$.

  \item [{$-$}]There is a pluripolar set $E\subset M$ invariant by the flow of $X$ such that every complex trajectory $C_{z}$ with $z\in M\setminus E$ is of the same type. In particular, we say that $X$ is of type $\mathbb{C}$ (resp. $\mathbb{C}^{\ast}$) if $C_{z}$ is of type $\mathbb{C}$ (resp. $\mathbb{C}^{\ast}$) for a $z\in M\setminus E$.

  \item  [{$-$}] Any trajectory of type $\mathbb{C}^{\ast}$ is proper. Moreover, if $X$ is of type $\mathbb{C}^{\ast}$, there is a meromorphic first integral for $X$, and hence all the trajectories of $X$ are proper.
\end{itemize}


Let us summarize the results of this article by sections.

\noindent $\bullet$ Section 2. \\
\noindent In \cite[Theorem\,5.2]{BL}, it is proved that the complement
of the graph of a meromorphic function $s:\mathbb{C}\to
\mathbb{P}^{1}$ in $\mathbb{C}^{2}$ is dominable. In fact, an
explicit fiber-preserving surjective dominating map $\Phi$ from
$\mathbb{C}^{2}$ to $\mathbb{C}^{2}\setminus
\textnormal{graph}(s)$ is constructed in terms of an entire
function $h:\mathbb{C}\to\mathbb{C}$ whose graph does not meet
$\textnormal{graph}(s)$ and the entire map $\Psi(t,w)=
(e^{tw}-1)/t$ (see (II) of $\S2$). One of the main motivations of
this article is to see if $\Phi$ can be given using a complex
flow.

We define a family $\{Z^{u}\}_{u\in\mathbb{C}(z)}$ of complete
vector fields on $\mathbb{C}^{2}\setminus \textnormal{graph}(s)$,
of type $\mathbb{C}^{\ast}$, whose trajectories are contained in
vertical lines (Proposition\,\ref{P1}). It allows us to obtain a
family $\{f^{u}\}_{u\in\mathbb{C}(z)}$ of fiber-preserving
surjective dominating maps from $\mathbb{C}^{2}$ to
$\mathbb{C}^{2}\setminus \textnormal{graph}(s)$, where each
$f^{u}$ is defined in terms of the above $h\in\mathbb{C}(z)$ and
of the complex flow of $Z^{u}$. Moreover, by integration of
$Z^{u}$, we can  explicitly obtain $f^{u}$ and show that the map
$\Phi$ constructed by Buzzard and Lu in \cite{BL} is one of the
dominating maps of $\{f^{u}\}_{u\in\mathbb{C}(z)}$
(Theorem~\ref{T1}).

\noindent $\bullet$ Section 3. \\
Let $D$ be a double section over $\mathbb{C}$ in $\mathbb{C}\times \mathbb{P}^{1}$.
We define a family $\{W^{u}\}_{u\in\mathbb{C}(z)}$ of complete
vector fields on $(\mathbb{C}\times \mathbb{P}^{1})\setminus D$,
of type $\mathbb{C}^{\ast}$, whose trajectories are contained in
vertical lines (Proposition\,\ref{P2}). We obtain a family
$\{g^{u}\}_{u\in\mathbb{C}(z)}$ of fiber-preserving surjective
dominating maps  from $\mathbb{C}^{2}$ to $(\mathbb{C}\times
\mathbb{P}^{1})\setminus D$, where each $g^{u}$ is defined in
terms of the complex flow of $W^{u}$ and of a holomorphic function
$\sigma:\mathbb{C}\to \mathbb{P}^{1}$, which is determined in \cite[Theorem\,1.3]{BL2}, whose graph does not meet
$D$ (Theorem~\ref{T2}).

As a corollary, we obtain
an alternative proof of \cite[Theorem\,1.2]{BL2}, using only $\sigma$
and the completeness of $W^{u}$ (Corollary~\ref{C2}).

\noindent $\bullet$ Section 4. \\
Let $s:\mathbb{C}\to\mathbb{P}^{1}$ be a meromorphic function.
From Section 2, it follows that each vector field $Z^{u}$ on
$\mathbb{C}^{2}\setminus \textnormal{graph}(s)$ is complete and
never vanishes. One natural question is to ask whether there is a
complete holomorphic vector field $X$ on $\mathbb{C}^{2}$
different from $Z^{u}$ such that when it is restricted to
$\mathbb{C}^{2}\setminus \textnormal{graph}(s)$ is complete and
never vanishes. There are two possibilities for such an $X$ to be
analyzed: $X$ is identically zero on $\textnormal{graph}(s)$, or
it is not. In this article we will study only the latter one. In
particular, $X$ has at most isolated zeros, and $X$ is tangent to
$\textnormal{graph}(s)$.

First, we will prove that $s$ has at most one pole
(Lemma\,\ref{Superficies R}).

We study complete vector fields on $\mathbb{C}^{2}$ tangent to
$C=\textnormal{graph}(s)$. We prove that there are infinitely many
complete vector fields $X$, tangent to $C$, and without zeros on
$C$. In this case, all the trajectories of $X$ are proper and of
type $\mathbb{C}$, when $s$ has no poles ($C$ of type
$\mathbb{C}$), or of type $\mathbb{C}^{\ast}$, when $s$ has one
pole ($C$ of type $\mathbb{C}^{\ast}$). In the former case, the
vector field is analytically equivalent, by a fiber-preserving
automorphism of $\mathbb{C}^{2}$ that takes
$\textnormal{graph}(s)$ into $\{t=0\}$, to a polynomial vector
field. In the latter case, the vector field is analytically
equivalent, by a fiber-preserving automorphism of $\mathbb{C}^{2}$
that takes $\textnormal{graph}(s)$ into
$\textnormal{graph}(1/z^{k})$, to a polynomial vector field. For
$s$ without poles, and fixed $p\in C$, we prove that there exist
infinitely many complete holomorphic vector fields $X$, tangent to
$C$ and with only one zero on $C$ at $p$. In this case, moreover,
$X$ can be defined of type $\mathbb{C}$ and with almost all its
trajectories non-proper, or of type $\mathbb{C}^{\ast}$ ($\S$4.1,
$\S$4.2, Theorem\,\ref{T3}).

\noindent $\bullet$ Section 5. \\
Let $A$ be a subset of  $\mathbb{C}^{2}$ invariant by the flow of an holomorphic vector field $X$.
We study, in some cases, the dominability of $\mathbb{C}^{2}\setminus A$ when $X_{\mid A}$ is
complete. If $A$ is an analytic curve transversal to the foliation
defined by $X$ we determine that $A$ is the graph
of a meromorphic function, after an analytic automorphism, and hence $\mathbb{C}^{2}\setminus A$ is
dominable (Proposition\,\ref{P4} and Theorem\,\ref{T4}). If $X$ is
a polynomial vector field, with isolated singularities, and $A=C_{z}$ is
a proper trajectory  of type $\mathbb{C}^{\ast}$ where
$\overline{C}_{z}$ is a singular curve, we prove that there is a
 dominating holomorphic map $\Gamma$ from $\mathbb{C}^{2}$ to
$\mathbb{C}^{2}\setminus C_{z}$, such that
$\Gamma(\mathbb{C}^{2})$ is biholomorphic to
$\left({\mathbb{C}}^{2}\setminus
\{xy(y^{r}-ax^{s})=0\}\right)\cup\{(0,0)\}$, with $r$, $s\in\mathbb{N}^{+}$, $rs \neq 1$ and $(r,s)=1$ (Theorem\,\ref{T5}).

\subsection*{Acknowledgements}
I want to thank Professor Miguel S\'anchez Caja and the Universidad de Granada for
their invitation, during which I could finish this article. Thanks as well
to Professors Junjiro Noguchi and Takeo Ohsawa to let me know the work of Buzzard and Lu.
Finally, we also want to thank the referee for his suggestions that have improved
this paper a lot.


\section{Complement of the graph of a meromorphic function}

\subsection{Dominability of  the complement of a smooth cubic  in $\mathbb{P}^{2}$ }
Let $A$ be a smooth cubic in $\mathbb{P}^{2}$ and let
$X=\mathbb{P}^{2}\setminus A$. In \cite[Proposition 5.1]{BL} G.
Buzzard and S. Lu proved that $X$ is holomorphically dominable by $\mathbb{C}^{2}$. The proof is based
in two points:

\begin{enumerate}
\item [{\bf (I)}] The existence of a meromorphic function $s:\mathbb{C}\to\mathbb{P}^{1}$, which is associated to $A$,
such that dominability of $\mathbb{C}^{2}\setminus \textnormal{graph}(s)$  implies dominability of $X$.
\item [{\bf (II)}] The construction of an explicit fiber-preserving
holomorphic map $\Phi$ from $\mathbb{C}^{2}$ onto
$\mathbb{C}^{2}\setminus \textnormal{graph}  (s)$ with
Jacobian determinant not identically zero.
\end{enumerate}

In fact, the proof of (II) implies that the complement in $\mathbb{C}^{2}$ of
any graph of a meromorphic function $s:\mathbb{C}\to
\mathbb{P}^{1}$ must be holomorphically dominable by
$\mathbb{C}^{2}$ \cite[Theorem 5.2]{BL}.

Let us summarize (II) (see \cite[p.\,645]{BL} for precise
details). Note that we can assume that $s \neq s_{\infty}$. In what follows, we will denote by $\mathbb{C}(z)$  the ring of entire functions of one variable.
To prove {\bf (II)}, there are two points:

\medskip

\noindent {\bf (II.1)} \em Existence of $h\in\mathbb{C}(z)$ such that  $\textnormal{graph}  (h)\cap \textnormal{graph}  (s)=\emptyset$. \em \\
\noindent
If $s=q/q_{1}$ for $q,\,q_{1}\in\mathbb{C}(z)$
 without common zeros, it is enough to define  $h=s-1/g$ where
$g=q_{1}/e^{g_{1}}$ for $g_{1}\in\mathbb{C}(z)$ such that $1/g$
and $s$ have the same principal parts. The existence of $g_{1}$  follows from Mittag-Leffler and Weierstrass Theorems as we can see in \cite[p.\,645]{BL}.

\medskip

\noindent {\bf (II.2)} \em Explicit definition of $\Phi$ using (II.1) and  $\Psi(t,w)= (e^{tw}-1)/t$.\em \\
Note that
$\Psi(t,w)$ is entire on $\mathbb{C}^{2}$ because
$$\Psi(t,w)=w+\frac{tw^{2}}{2!}+\frac{t^{2}w^{3}}{3!}+\dots$$
Let us take
$$\phi(z,w)=h(z)-\Psi(g(z),w)=h(z)- \frac{e^{g(z)w}-1}{g(z)}=s(z)-\frac{e^{g(z)w}}{g(z)}.$$
Note that $\phi(z,w)$ is entire on $\mathbb{C}^{2}$ and equal to $h(z) - w$  if $g(z)=0$. It holds that
$$\Phi(z,w)=(z,\phi(z,w))=\left (z,s(z)-\frac{e^{g(z)w}}{g(z)}\right)$$
is a fiber-preserving dominating map from
$\mathbb{C}^{2}$ onto $\mathbb{C}^{2}\setminus \textnormal{graph}
(s)$ with non-vanishing Jacobian determinant.

\medskip

\subsection{Buzzard-Lu's results on $\mathbb{C}^{2}\setminus \textnormal{graph}(s)$ revisited}

\subsubsection{Family
$\{Z^{u}\}_{u\in\mathbb{C}(z)}$ of complete vector fields on
$\mathbb{C}^{2}\setminus \textnormal{graph}(s)$}

Let us see in the following proposition that there is a natural family of complete vector fields on $\mathbb{C}^{2}\setminus \textnormal{graph}(s)$.
\begin{proposition}\label{P1}

Let $s:\mathbb{C}\to\mathbb{P}^{1}$ be a meromorphic function.
There exists a family $\{Z^{u}\}_{u\in\mathbb{C}(z)}$ of vector fields
on $\mathbb{C}^{2}\setminus\textnormal{graph}(s)$, which are complete, of type $\mathbb{C}^{\ast}$, and
whose trajectories are contained in vertical lines. Moreover, each $Z^{u}$ extends to $\mathbb{C}\times\mathbb{P}
^{1}$, as complete vector field of type $\mathbb{C}^{\ast}$,
vanishing only along the graphs of $s$ and ${s}_{\infty}$.\\
\end{proposition}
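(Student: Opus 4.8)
The plan is to exhibit the family explicitly as vertical fields and verify the four asserted properties by direct computation. Write $s=q/q_{1}$ with $q,q_{1}\in\mathbb{C}(z)$ having no common zeros, so that $\textnormal{graph}(s)$ meets the chart $\mathbb{C}^{2}$ in the zero locus of $q_{1}(z)w-q(z)$. For each entire function $u$ I would set
$$Z^{u}:=e^{u(z)}\bigl(q_{1}(z)\,w-q(z)\bigr)\frac{\partial}{\partial w},$$
a field with no $\partial/\partial z$-component (the factor $e^{u}$ being the canonical nonvanishing entire multiplier attached to $u\in\mathbb{C}(z)$). Since $z$ is then a first integral, every trajectory lies in a vertical line $\{z=z_{0}\}$, which settles the last qualitative claim at once; the remaining work is holomorphy and the zero locus on $\mathbb{C}\times\mathbb{P}^{1}$, completeness, and the generic type.

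For the extension and the zeros I would pass to the chart $v=1/w$ at infinity, where $\partial/\partial w=-v^{2}\,\partial/\partial v$, and compute
$$Z^{u}=e^{u(z)}\bigl(-q_{1}(z)\,v+q(z)\,v^{2}\bigr)\frac{\partial}{\partial v}=e^{u(z)}\,v\,\bigl(q(z)\,v-q_{1}(z)\bigr)\frac{\partial}{\partial v}.$$
This is holomorphic on all of $\mathbb{C}\times\mathbb{P}^{1}$ and vanishes precisely on $\{v=0\}=\mathbb{C}\times\{\infty\}=\textnormal{graph}(s_{\infty})$ and on $\{v=q_{1}/q\}=\textnormal{graph}(s)$; matching with the $w$-chart, where the zero locus is $\{q_{1}w=q\}=\textnormal{graph}(s)$, shows that on $\mathbb{C}^{2}\setminus\textnormal{graph}(s)$ the field never vanishes. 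Here the no-common-zeros normalization from (II.1) is exactly what rules out a spurious zero over a pole, since $q_{1}(z_{0})=0$ then forces $q(z_{0})\neq 0$, so $Z^{u}$ reduces there to the nonzero translation $-e^{u(z_{0})}q(z_{0})\,\partial/\partial w$.

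For completeness I would argue fiberwise: on each $\{z_{0}\}\times\mathbb{P}^{1}$ the restriction is a holomorphic vector field on the compact surface $\mathbb{P}^{1}$, hence complete, and verticality promotes fiberwise completeness to completeness on $\mathbb{C}\times\mathbb{P}^{1}$ and, after deleting the invariant zero locus $\textnormal{graph}(s)$, on $\mathbb{C}^{2}\setminus\textnormal{graph}(s)$. To read off the type I would solve the fiber ODE explicitly: over a non-pole point $z_{0}$ the field is the linear field $e^{u(z_{0})}q_{1}(z_{0})\bigl(w-s(z_{0})\bigr)\partial/\partial w$ with the two simple zeros $s(z_{0})$ and $\infty$, whose flow
$$t\longmapsto s(z_{0})+\bigl(w_{0}-s(z_{0})\bigr)\,e^{\,e^{u(z_{0})}q_{1}(z_{0})\,t}$$
is a covering $\mathbb{C}\to\mathbb{C}\setminus\{s(z_{0})\}\cong\mathbb{C}^{\ast}$, so the trajectory is of type $\mathbb{C}^{\ast}$.

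Finally, the poles of $s$ form a discrete set $P\subset\mathbb{C}$ over which the field degenerates to the translation above (trajectories of type $\mathbb{C}$), but $P\times\mathbb{C}$ is a pluripolar, flow-invariant exceptional locus; away from it every trajectory is of type $\mathbb{C}^{\ast}$, so $Z^{u}$ is of type $\mathbb{C}^{\ast}$ on both $\mathbb{C}^{2}\setminus\textnormal{graph}(s)$ and $\mathbb{C}\times\mathbb{P}^{1}$. I expect the main obstacle to be the bookkeeping at the poles of $s$ and along the line at infinity: confirming that the extended field has no zeros beyond the two prescribed graphs (precisely where the no-common-zeros normalization is used) and checking that the type-$\mathbb{C}$ degeneration over $P$ does not disturb the generic type.
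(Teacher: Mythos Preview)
Your proposal is correct and follows essentially the same approach as the paper: you define the identical family $Z^{u}=e^{u(z)}(q_{1}(z)w-q(z))\,\partial/\partial w$ and verify the listed properties fiberwise. Your write-up is in fact more detailed than the paper's (explicit $v=1/w$ computation for the extension, explicit flow formula for the type), but the method and key ideas coincide.
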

\begin{proof}
Let us take $s=q/q_{1}$ for $q,\,q_{1}\in\mathbb{C}(z)$ without common zeros. For any
$u\in\mathbb{C}(z)$, we define
$$Z^{u}= e^{u(z)}\cdot [q_{1}(z)w - q_{1}(z) s(z) ]\frac{\partial}{\partial w}.$$
It is easy to check the following facts, which imply the proof of proposition:

$\bullet$ The trajectories of $Z^{u}$ are contained in vertical lines.

$\bullet$ $Z^{u}$ is holomorphic on $\mathbb{C}^{2}$ and only vanishes along $\textnormal{graph}(s)$.

$\bullet$ $Z^{u}$ restricted to $\{z=z_{0}\}$, with $z_{0}\in\mathbb{C}$, is a complete linear vector field. Then $Z^{u}$ is complete on $\mathbb{C}^{2}\setminus\textnormal{graph}(s)$.

$\bullet$
The trajectory of $Z^{u}$ contained in $\{z=z_{0}\}$ is  either of type $\mathbb{C}$,  if $q_{1}(z_{0})=0$; or of type $\mathbb{C}^{\ast}$, if $q_{1}(z_{0})\neq 0$.
Therefore, $Z^{u}$ is of type $\mathbb{C}^{\ast}.$

$\bullet$ $Z^{u}$ extends to $\mathbb{C}\times \mathbb{P}^{1}$ as holomorphic vector field (also denoted by $Z^{u}$) vanishing on $\{w=\infty\}$.
Thus $Z^{u}$ is complete on $\mathbb{C}\times \mathbb{P}^{1}$, and of type $\mathbb{C}^{\ast}$.
\end{proof}
Let $Z^{u}$ be a vector field  of $\{Z^{u}\}_{u\in\mathbb{C}(z)}$. The meromorphic function $${\mathcal{P}}^{u}(z,w)={\left(\frac{2\pi
i}{e^{u(z)}q_{1}(z)}\right)}^{2}$$ on $\mathbb{C}^{2}$ is the
period function of $Z^{u}$. Thus for any $(z,w)$ with $q_{1}(z)\neq 0$,
the trajectory of $Z^{u}$ through $(z,w)$, of type
$\mathbb{C}^{\ast}$, has period $\sqrt{{\mathcal{P}}^{u}}$. That
is, $\sqrt{{\mathcal{P}}^{u}}\mathbb{Z}$ is the discrete subgroup
of $(\mathbb{C},+)$ defined by the complex times that fix $(z,w)$ by
the flow of $Z^{u}$ (details on period function, see \cite[page 84]{Suzuki-springercorto}).

\medskip

Let us see in the following remark that (II.1) can be interpreted in terms of a vector field of
$\{Z^{u}\}_{u\in\mathbb{C}(z)}$.
\begin{remark}\em
Let $s=q/q_{1}$ and $h=s-1/g$ for $g,q,\,q_{1}\in\mathbb{C}(z)$ as
in (II.1), where $g=q_{1}/e^{g_{1}}$ with $g_{1}\in\mathbb{C}(z)$.

The vector field $Z^{-g_{1}}$ of $\{Z^{u}\}_{u\in\mathbb{C}(z)}$,
which is defined by
$$e^{-g_{1}}\cdot [q_{1}(z)w - q_{1}(z) s(z) ]\frac{\partial}{\partial w}= [g(z)w - g(z) s(z) ]\frac{\partial}{\partial w},$$
has a period function ${\mathcal{P}}^{-g_{1}}=(2\pi i / g)^{2}$.
It holds that $s-\sqrt{{\mathcal{P}}^{-g_{1}}}/2\pi i=h$. Note that
to determine $h$ as in (II.1) it is enough to determine
$u\in\mathbb{C}(z)$ such that $s-\sqrt{{\mathcal{P}}^{u}}/2\pi
i\in\mathbb{C}(z)$.

\em

\end{remark}

\vspace{0.015cm}

\subsubsection{Family
$\{f^{u}\}_{u\in\mathbb{C}(z)}$ of fiber-preserving surjective
dominating maps} We will denote by ${\varphi}^{u}$ the global flow
of $Z^{u}$ . Recall that ${\varphi}^{u}$ is a map from $\mathbb{C}
\times(\mathbb{C}\times \mathbb{P}^{1})$ to $\mathbb{C}\times
\mathbb{P}^{1}$ defined by $\varphi^{u}(t,z,w)=(z(t),w(t))$, where
$(z(t),w(t))$ is the solution of $Z^{u}$ through $z(0)=z$ and
$w(0)=w$.

Let us see in the following theorem that there is a family of
fiber-preserving dominating holomorphic maps from $\mathbb{C}^{2}$
onto $\mathbb{C}^{2}\setminus \textnormal{graph}(s)$. These
holomorphic maps will be defined using  $h\in\mathbb{C}(z)$ of
(II.1) and the complex flows of the vector fields of
$\{Z^{u}\}_{u\in\mathbb{C}(z)}$.

\begin{theorem}
\label{T1} Let $s:\mathbb{C}\to\mathbb{P}^{1}$ be a meromorphic
function. Let us consider $h\in\mathbb{C}(z)$ as in (II.1) and the complex flow ${\varphi}^{u}$ of $Z^{u}$.
Then, the family
$$\{f^{u }\}_{u\in\mathbb{C}(z)}$$
of holomorphic maps
$$f^{u}(z,t)=\varphi^{u}(t,z,h(z)),$$
is a family of  fiber-preserving dominating maps from $\mathbb{C}^{2}$ onto
$\mathbb{C}^{2}\setminus\textnormal{graph}(s)$.\\
\noindent  To be more precise, let $s=q/q_{1}$ and  $h=s-1/g$ for
$g,q,\,q_{1}\in\mathbb{C}(z)$ as in (II.1). Then, $f^{u}$ is
explicitly given by
$$f^{u}(z,t)= \left( z,
s(z)-\dfrac{e^{[e^{u(z)}q_{1}(z)]t}}{g(z)}\right).
$$

\end{theorem}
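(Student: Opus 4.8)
The plan is to integrate the vector field $Z^{u}$ explicitly, read off the flow $\varphi^{u}$, and then check the four required properties directly: holomorphy on all of $\mathbb{C}^{2}$, image contained in $\mathbb{C}^{2}\setminus\textnormal{graph}(s)$, surjectivity, and non-vanishing of the Jacobian. First I would compute $\varphi^{u}$. Writing $s=q/q_{1}$, we have $Z^{u}=e^{u(z)}q_{1}(z)\,[\,w-s(z)\,]\,\partial/\partial w$, so along any solution the first coordinate is constant, $z(t)=z$, while the second solves the scalar linear equation $\dot w=c\,(w-s(z))$ with coefficient $c=e^{u(z)}q_{1}(z)$ constant in $t$. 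Integrating gives $w(t)=s(z)+(w-s(z))\,e^{ct}$, hence $\varphi^{u}(t,z,w)=(z,\,s(z)+(w-s(z))\exp(e^{u(z)}q_{1}(z)t))$. Evaluating at the initial point $(z,h(z))$ and using $h-s=-1/g$ from (II.1) yields at once the asserted formula $f^{u}(z,t)=(z,\,s(z)-\exp(e^{u(z)}q_{1}(z)t)/g(z))$. This derivation is valid where $q_{1}(z)\neq 0$; the genuine content is to show the formula still defines a holomorphic map across the poles of $s$.

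To establish holomorphy on all of $\mathbb{C}^{2}$ I would reuse the device of (II.2). Setting $v=e^{u+g_{1}}$ (entire and nowhere zero), one has $e^{u}q_{1}=vg$ because $q_{1}/g=e^{g_{1}}$; then, using $s-1/g=h$, the second coordinate of $f^{u}$ rewrites as $h(z)-\Psi(g(z),v(z)t)$, where $\Psi(t,w)=(e^{tw}-1)/t$ is the entire map of $\S2$. Since $h$, $g$, $v$ and $\Psi$ are all entire, $f^{u}$ extends holomorphically over the zeros of $g$, i.e. the poles of $s$, where it degenerates to the affine map $t\mapsto(z_{0},\,h(z_{0})-v(z_{0})t)$.

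The remaining properties are then short. For the image: where $q_{1}(z)\neq 0$ the quantity $\exp(e^{u(z)}q_{1}(z)t)/g(z)$ is a nonzero finite number, so the second coordinate of $f^{u}$ differs from $s(z)$; where $q_{1}(z_{0})=0$ the value $f^{u}(z_{0},t)$ is finite whereas $s(z_{0})=\infty$, so again $f^{u}(z_{0},t)\notin\textnormal{graph}(s)$. For surjectivity, fixing a target $(z_{0},w_{0})\notin\textnormal{graph}(s)$ I would solve a single scalar equation in $t$: if $q_{1}(z_{0})\neq 0$ it reads $e^{ct}=g(z_{0})(s(z_{0})-w_{0})$ with $c\neq 0$ and nonzero right-hand side, solvable because $t\mapsto e^{ct}$ covers $\mathbb{C}^{\ast}$; if $q_{1}(z_{0})=0$ the equation $h(z_{0})-v(z_{0})t=w_{0}$ is affine with $v(z_{0})\neq 0$ and covers the entire fiber.

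Finally, $f^{u}$ is fiber-preserving by construction, and from the $\Psi$-form its Jacobian determinant equals $\partial_{t}f^{u}_{2}=-v(z)\exp(g(z)v(z)t)$, which never vanishes, so in particular it is not identically zero and $f^{u}$ dominates. I expect the only real obstacle to be the behaviour along the poles of $s$, namely verifying that the explicit formula is honestly holomorphic and surjective there; once the entireness of $\Psi$ is invoked this is routine, and everything else follows directly from the linear integration.
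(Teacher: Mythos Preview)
Your proof is correct. The main difference from the paper's argument is one of order and emphasis: the paper first invokes the completeness of $Z^{u}$ on $\mathbb{C}^{2}$ (Proposition~\ref{P1}) to conclude immediately that $f^{u}(z,t)=\varphi^{u}(t,z,h(z))$ is holomorphic on all of $\mathbb{C}^{2}$ (the global flow is holomorphic on $\mathbb{C}\times\mathbb{C}^{2}$), that it is fiber-preserving, that it is surjective onto $\mathbb{C}^{2}\setminus\textnormal{graph}(s)$ (the trajectory through $(z,h(z))$ is the whole vertical line minus the zero of $Z^{u}$), and that its Jacobian is nonzero at $(z,0)$; only afterwards does it integrate the linear ODE to obtain the explicit formula. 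You instead compute the formula first and then verify all four properties by hand, in particular establishing holomorphy across the poles of $s$ by rewriting the second component as $h(z)-\Psi(g(z),v(z)t)$ with $v=e^{u+g_{1}}$. Your route is slightly more laborious but entirely self-contained and yields the extra information that the Jacobian is in fact nowhere vanishing; the paper's route is shorter because completeness of the flow absorbs the holomorphy and surjectivity checks in one stroke.
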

\begin{proof} By definition, each $f^{u}$ is holomorphic. As $h(z)\neq \infty $ and $h(z)\neq s(z)$ then
$(z,h(z))$ is not in the set of zeros of $Z^{u}$ by
Proposition~\ref{P1}. Thus, by completeness of $Z^{u}$, any point
$(z,w)\in\mathbb{C}^{2}\setminus \textnormal{graph}(s)$ can be
reached by the solution $(z(t),w(t))$ of $Z^{u}$, with $z(0)=z$
and $w(0)=h(z)$, after time $t$, and $f^{u}$ is surjective. The
vanishing of $\partial/\partial z$-component of $Z^{u}$ implies
that $f^{u}$ is fiber-preserving, and $Z^{u}(z,h(z))\neq{0}$
implies that the Jacobian determinant of $f^{u}$ is not
identically zero, since it is $e^{u(z)}[q_{1}(z)h(z)-q(z)]\neq{0}$
at $(z,0)$.

Let us obtain the explicit expression of $f^{u}$. We take $(z,h(z))$ in $\mathbb{C}^{2}\setminus
\textnormal{graph}(s)$. Then $z(t)\equiv z$. The second component
of $X$ gives the linear differential equation
$$dw/dt=
[e^{u(z)}q_{1}(z)]w(t) - [e^{u(z)}q_{1}(z)]s(z),\,\,w(0)=w=h(z).
$$
If $g(z)\neq{0}$, the above equation can be explicitly solved, and
we get
\begin{alignat*}{3}
w(t) = & \left\{ h(z)-\int_{0}^{t} [e^{u(z)}q_{1}(z)]s(z)\,
e^{-\left[\int_{0}^{x}[e^{u(z)}q_{1}(z)]ds \right]}dx \right\}
e^{\int_{0}^{t}[e^{u(z)}q_{1}(z)]ds}=\\
& \\
             = &  \left\{ h(z)-    [e^{u(z)}q_{1}(z)]s(z) \int_{0}^{t} e^{-[e^{u(z)}q_{1}(z)]x} dx \right\} e^{[e^{u(z)}q_{1}(z)]t}=\\
& \\
             = & [h(z)-s(z)]e^{[e^{u(z)}q_{1}(z)]t}+s(z)=s(z)-e^{[e^{u(z)}q_{1}(z)]t}/g(z).
\end{alignat*}
Note that $w(t)$ (by definition, it is entire) is equal to
$w-h(z)$ when $g(z)=0$.
\end{proof}

\noindent
As a consequence of Theorem~\ref{T1}, let us prove in the following corollary that the dominating map $\Phi$
constructed by Buzzard and Lu can be defined using $h\in\mathbb{C}(z)$ of (II.1) and the complex flow ${\varphi}^{-g_{1}}$ of  $Z^{-g_{1}}$.

\begin{corollary} Given a meromorphic
function $s:\mathbb{C}\to\mathbb{P}^{1}$ suppose  that $s=q/q_{1}$
and $h=s-1/g$ for $g,q,\,q_{1}\in\mathbb{C}(z)$ as in (II.1),
where $g=q_{1}/e^{g_{1}}$ with $g_{1}\in\mathbb{C}(z)$. Then, the
domimating map $f^{-g_{1}}=\varphi^{-g_{1}}(t,z,h(z))$ of
$\{f^{u}\}_{u\in\mathbb{C}(z)}$ is the dominating map $\Phi(z,t)$
constructed by Buzzard an Lu of (II.2).
\end{corollary}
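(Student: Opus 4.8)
The plan is to argue by a direct substitution into the explicit formula already established in Theorem~\ref{T1}, followed by a comparison with the formula for $\Phi$ recalled in (II.2). First I would specialize the general expression
$$f^{u}(z,t)=\left(z,\,s(z)-\frac{e^{[e^{u(z)}q_{1}(z)]t}}{g(z)}\right)$$
of Theorem~\ref{T1} to the parameter $u=-g_{1}$. The one computation that makes everything collapse is the identity $e^{-g_{1}(z)}q_{1}(z)=g(z)$, which is nothing but the definition $g=q_{1}/e^{g_{1}}$ from (II.1) rewritten as $g=q_{1}e^{-g_{1}}$. Substituting it into the exponent yields
$$f^{-g_{1}}(z,t)=\left(z,\,s(z)-\frac{e^{g(z)t}}{g(z)}\right).$$

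Next I would place this side by side with the Buzzard--Lu map of (II.2),
$$\Phi(z,w)=\left(z,\,s(z)-\frac{e^{g(z)w}}{g(z)}\right),$$
and simply rename the second coordinate $t$ of the source as $w$. The two second components are then literally the same entire function, so $f^{-g_{1}}(z,t)=\Phi(z,t)$, which is the assertion.

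The only point requiring a word of care is the locus $\{g(z)=0\}$, on which the closed expression $e^{g(z)t}/g(z)$ is only formally singular while both $f^{-g_{1}}$ and $\Phi$ are genuinely entire (for $\Phi$ this is the content of the $\Psi$-expansion in (II.2); for $f^{-g_{1}}$ it is built into the definition via the flow $\varphi^{-g_{1}}$ and was noted at the end of the proof of Theorem~\ref{T1}). Since the set $\{g\neq 0\}$ is dense and the two maps agree there, they agree everywhere by continuity; concretely, letting $g(z)\to 0$ in $s(z)-e^{g(z)t}/g(z)=h(z)+(1-e^{g(z)t})/g(z)$ gives the common value $h(z)-t$, matching $\phi(z,t)=h(z)-\Psi(g(z),t)$ at $\Psi(0,t)=t$. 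I do not expect any genuine obstacle here: the entire argument is a substitution plus an identification of variables, and the $g=0$ stratum is handled by the density/continuity remark above rather than by a separate computation.
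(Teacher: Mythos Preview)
Your proof is correct and takes essentially the same approach as the paper: the paper's proof is the single sentence ``It follows from the explicit expression of $f^{u}$ obtained in Theorem~\ref{T1},'' and you have simply written out that substitution in detail, including the harmless observation about the $\{g=0\}$ locus.
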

\begin{proof}
It follows from the explicit expression of $f^{u}$ obtained in
Theorem~\ref{T1}.
\end{proof}


\section{Complements of double sections}

A  complex subvariety $D\subset R \times {\mathbb{P}}^{1}$ of dimension one
is a double section over a noncompact Riemann surface $R$
if $D=\{(z,w)| a(z)w^2 + b(z)w+ c(z) = 0\},$
where $a$, $b$, $c$ are holomorphic functions. Let us assume
that $R=\mathbb{C}$.

\subsection{Buzzard-Lu's results on $(\mathbb{C}\times\mathbb{P}^{1})\setminus D$ revisited}
G. Buzzard and S. Lu  proved in \cite[\S 2]{BL2} that
$(\mathbb{C}\times \mathbb{P}^{1})\setminus D$ is holomorphically
dominable. To prove it, they consider three steps:

\begin{enumerate}
\item [{\bf (1)}] The existence of a holomorphic function
$\sigma:\mathbb{C}\to \mathbb{P}^{1}$ whose graph does not meet
$D$  \cite[Theorem\,1.3]{BL2}.

\item [{\bf (2)}] The existence of a change of coordinates in
$\mathbb{C}\times \mathbb{P}^{1}$ that transforms  $\sigma$ into
$s_{\infty}$ so that $D$ is $\{(z,w)| w^2  = h(z)\}$
\cite[Corollary 1.4]{BL2}.

\item [{\bf (3)}] The explicit construction of a fiber-preserving
dominating  map from $\mathbb{C}^{2}$ to $(\mathbb{C}\times
\mathbb{P}^{1})\setminus D$ \cite[Theorem\,1.2]{BL2} using (1) and (2).
\end{enumerate}

\subsubsection{Family
$\{W^{u}\}_{u\in\mathbb{C}(z)}$ of complete vector fields on
$(\mathbb{C}\times \mathbb{P}^{1})\setminus D$}

Let us see in the following proposition that there is a natural family of complete vector fields on $(\mathbb{C}\times \mathbb{P}^{1})\setminus D$.

\begin{proposition}\label{P2}
Let $D$ be a double section over $\mathbb{C}$.
There exists a family $\{W^{u}\}_{u\in\mathbb{C}(z)}$ of vector fields  on
$(\mathbb{C}\times \mathbb{P}^{1})\setminus D$, which are complete, of type $\mathbb{C}^{\ast}$, and
whose trajectories are contained in vertical lines. Moreover, each $W^{u}$ vanishes only along $D$.
\end{proposition}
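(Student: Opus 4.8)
The plan is to mirror the construction of Proposition~\ref{P1} as closely as possible, replacing the single root $s$ of the linear factor $q_1 w - q_1 s$ by the \emph{pair} of roots of the quadratic $a(z)w^2 + b(z)w + c(z)$. Writing $D=\{a(z)w^2+b(z)w+c(z)=0\}$ with $a,b,c$ holomorphic, I would define, for each $u\in\mathbb{C}(z)$,
\[
W^{u}=e^{u(z)}\bigl[a(z)w^{2}+b(z)w+c(z)\bigr]\frac{\partial}{\partial w}.
\]
The point is that for fixed $z=z_0$ this is a quadratic (Riccati-type) vector field in $w$ on the fiber $\mathbb{P}^{1}$, whose singular points are exactly the two roots of the quadratic, i.e.\ the two points of $D$ over $z_0$. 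Removing $D$ then removes precisely these singularities from each vertical line, so $W^{u}$ is holomorphic and nonvanishing on $(\mathbb{C}\times\mathbb{P}^{1})\setminus D$, and vanishes only along $D$. The first two bullet points (trajectories in vertical lines, vanishing locus equal to $D$) are immediate from the form of the field, since the $\partial/\partial z$-component is zero and the $\partial/\partial w$-coefficient factors through the defining function of $D$.

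The heart of the proof is the completeness and the $\mathbb{C}^{\ast}$-type claim, both of which reduce to a fiberwise analysis. On a generic fiber $\{z=z_0\}$ where the quadratic has two distinct finite roots $w_1\neq w_2$, the field is a genuine quadratic polynomial vector field on $\mathbb{C}$, and I would integrate it explicitly by partial fractions: the flow is a Möbius transformation of $w$ fixing $w_1$ and $w_2$, and in the coordinate $\zeta=(w-w_1)/(w-w_2)$ it becomes the linear field $\lambda(z_0)\zeta\,\partial/\partial\zeta$ with $\lambda(z_0)=e^{u(z_0)}a(z_0)(w_1-w_2)$. This exhibits the fiber trajectory as $\mathbb{C}^{\ast}$ (the punctured Riemann sphere minus the two fixed points, which on $\mathbb{P}^1$ is $\mathbb{P}^1\setminus\{w_1,w_2\}\cong\mathbb{C}^{\ast}$), it is defined for all complex time, and it carries an explicit period $2\pi i/\lambda(z_0)$. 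The degenerate fibers — where $a(z_0)=0$ (one root escapes to $\infty$), or where the discriminant vanishes (a double root), or where $a,b,c$ share a common zero — must be handled separately but give trajectories of type $\mathbb{C}$ on a pluripolar set of $z$-values, consistent with the generic type being $\mathbb{C}^{\ast}$. Completeness on all of $(\mathbb{C}\times\mathbb{P}^{1})\setminus D$ follows because the vertical-line restrictions are complete (a Möbius flow on $\mathbb{P}^1$ is always complete) and the trajectories never cross fibers.

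The main obstacle I anticipate is the careful bookkeeping at the exceptional fibers, where the quadratic degenerates. Specifically, when $a(z_0)=0$ the field drops to linear order in $w$ and the point at infinity on $\mathbb{P}^1$ becomes a zero or a regular point of the extended field; I would need to verify that the extension of $W^{u}$ across $\{w=\infty\}$ remains holomorphic and that the flow stays complete there, exactly as in the $\partial_w$-coefficient analysis at infinity carried out in Proposition~\ref{P1}. Likewise, at common zeros of $a,b,c$ the whole fiber lies in the zero locus; but such common zeros are isolated and do not affect the generic classification. Having reduced everything to the fiberwise Riccati picture, the only genuinely new ingredient relative to Proposition~\ref{P1} is the passage from a linear to a quadratic field on each fiber, and the identification of the resulting trajectory type via the fixed-point/Möbius normalization above. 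Once the exceptional fibers are shown to be pluripolar and of type $\mathbb{C}$, the three structural facts recalled in the introduction for Stein (and here $\mathbb{P}^1$-fibered) surfaces give that $W^{u}$ is of type $\mathbb{C}^{\ast}$ and that its $\mathbb{C}^{\ast}$-trajectories are proper, completing the proof.
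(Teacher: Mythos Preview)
Your proposal is correct and essentially identical to the paper's proof: both define $W^{u}=e^{u(z)}[a(z)w^{2}+b(z)w+c(z)]\,\partial/\partial w$ and verify completeness and $\mathbb{C}^{\ast}$-type by a fiberwise analysis. The paper is terser---it simply asserts that on each vertical fiber $\{z=z_{0}\}\cup\{\infty\}\simeq\mathbb{P}^{1}$ the field is a complete holomorphic vector field with one or two zeros (where $D$ meets the fiber), so that the trajectory is $\mathbb{P}^{1}$ minus one or two points---whereas you work out the M\"obius normalization and the degenerate fibers explicitly, but the underlying argument is the same.
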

\begin{proof}

For any $u\in\mathbb{C}(z)$, one defines
$$W^{u}=e^{u(z)} \cdot [a(z)w^2 + b(z)w+ c(z) ]\frac
{\partial}{\partial w}. $$ It is easy to check the following facts, which imply the proof of the proposition:

$\bullet$  The trajectories of $W^{u}$ are contained in vertical lines.

$\bullet$ $W^{u}$ is holomorphic on $\mathbb{C}\times
\mathbb{P}^{1}$ and it only vanishes along $D$.

$\bullet$ $W^{u}$ on $\{z=z_{0}\} \cup \{ \infty\}\simeq
\mathbb{P}^{1}$ is a complete holomorphic vector field, with (one
or two) zeros where $D$ intersects $\{z=z_{0}\}\cup \{ \infty\}$.
Therefore, $W^{u}$ is complete on $(\mathbb{C}\times
\mathbb{P}^{1})\setminus D$.

$\bullet$ The trajectory of $W^{u}$ contained in $\{z=z_{0}\}$,
with $z_{0}\in\mathbb{C}$, is of type $\mathbb{C}^{\ast}$, if $D$
intersects $\{z=z_{0}\}$ at two points or of type $\mathbb{C}$ if
$D$ intersects  $\{z=z_{0}\}$ at one point. In particular, $Z^{u}$ is of
type $\mathbb{C}^{\ast}$.\end{proof}

\subsubsection{Family
$\{g^{u}\}_{u\in\mathbb{C}(z)}$ of fiber-preserving surjective
dominating maps}
One can define analogously as in Theorem~\ref{T1}
a family of fiber-preserving dominating holomorphic maps from
$\mathbb{C}^{2}$ onto $(\mathbb{C}\times \mathbb{P}^{1})\setminus D$ using (1) and the flow of $W^{u}$.
The proof of the following theorem is left to the reader.

\begin{theorem} \label{T2} Let $D\subset\mathbb{C}\times \mathbb{P}^{1}$
be a double section over $\mathbb{C}$. Let us consider the
holomorphic map $\sigma:\mathbb{C}\to \mathbb{P}^{1}$ whose graph does not meet $D$, according to (1), and the flow ${\psi}^{u}$ of $W^{u}$.
Then, the
family
$$\{g^{u}\}_{u\in\mathbb{C}(z)}$$ of holomorphic maps
$$g^{u}(z,t)=\psi^{u}(t,z,\sigma(z)),$$ is a family of
fiber-preserving dominating maps from $\mathbb{C}^{2}$ onto
$(\mathbb{C}\times \mathbb{P}^{1})\setminus D$.
\end{theorem}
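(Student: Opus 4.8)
The plan is to follow verbatim the argument given for Theorem~\ref{T1}, replacing $Z^{u}$ by $W^{u}$, the entire function $h$ by the holomorphic function $\sigma$, and the affine vertical fibers $\mathbb{C}$ by the compact fibers $\mathbb{P}^{1}$. First I would record that each $g^{u}$ is holomorphic, being the composition of the holomorphic flow $\psi^{u}$ of $W^{u}$ on $\mathbb{C}\times(\mathbb{C}\times\mathbb{P}^{1})$ with the holomorphic section $z\mapsto(z,\sigma(z))$. Since, by Proposition~\ref{P2}, the $\partial/\partial z$-component of $W^{u}$ vanishes identically, the flow $\psi^{u}$ fixes the first coordinate; hence $g^{u}(z,t)=(z,\,\cdot\,)$ and $g^{u}$ is fiber-preserving.

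For surjectivity, the key point is that for each fixed $z_{0}\in\mathbb{C}$ the restriction of $W^{u}$ to the fiber $\{z_{0}\}\times\mathbb{P}^{1}$ is a complete holomorphic vector field whose zero set is exactly $D\cap(\{z_{0}\}\times\mathbb{P}^{1})$, by Proposition~\ref{P2}. As established there, the complement $(\{z_{0}\}\times\mathbb{P}^{1})\setminus D$ is a single trajectory, of type $\mathbb{C}$ or $\mathbb{C}^{\ast}$. Because the graph of $\sigma$ does not meet $D$ (step (1)), the point $(z_{0},\sigma(z_{0}))$ lies on this trajectory and is not a zero of $W^{u}$; by completeness the flow starting at $(z_{0},\sigma(z_{0}))$ sweeps out the whole trajectory. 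Letting $z_{0}$ vary over $\mathbb{C}$, the map $g^{u}$ reaches every point of $(\mathbb{C}\times\mathbb{P}^{1})\setminus D$, so it is surjective.

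Finally, to show that the Jacobian determinant of $g^{u}$ is not identically zero I would evaluate at $t=0$. Since the first coordinate of $g^{u}$ is $z$ and $g^{u}(z,0)=(z,\sigma(z))$, the two columns of the Jacobian at $(z,0)$, computed in a chart where $\sigma(z)$ is finite, are $(1,\sigma'(z))$ and $W^{u}(z,\sigma(z))=(0,\,e^{u(z)}[a(z)\sigma(z)^{2}+b(z)\sigma(z)+c(z)])$. Hence the determinant equals $e^{u(z)}[a(z)\sigma(z)^{2}+b(z)\sigma(z)+c(z)]$, which is nonzero precisely because $(z,\sigma(z))\notin D$. Thus the Jacobian determinant does not vanish identically and each $g^{u}$ is a dominating map.

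The step I expect to require the most care is surjectivity, specifically the verification that one trajectory of $W^{u}$ fills the entire punctured fiber $(\{z_{0}\}\times\mathbb{P}^{1})\setminus D$, including the behaviour at $w=\infty$ when $D$ meets the fiber there (the case $a(z_{0})=0$) and when $\sigma(z_{0})=\infty$. These configurations are exactly the ones treated by the fiberwise analysis already carried out in Proposition~\ref{P2}, so once that proposition is invoked the remaining verifications are routine and entirely parallel to Theorem~\ref{T1}.
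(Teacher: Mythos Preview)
Your proposal is correct and is precisely what the paper intends: the paper does not give a proof of Theorem~\ref{T2} at all, stating instead that ``one can define analogously as in Theorem~\ref{T1} \ldots\ the proof of the following theorem is left to the reader.'' Your argument faithfully carries out that analogy, invoking Proposition~\ref{P2} for the fiberwise structure of $W^{u}$ exactly as Theorem~\ref{T1} invokes Proposition~\ref{P1}, so there is nothing to compare.
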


As a corollary we obtain an alternative proof of \cite[Theorem\,1.2]{BL2}.

\begin{corollary}\label{C2}
Let $D\subset\mathbb{C}\times \mathbb{P}^{1}$ be a double section
over $\mathbb{C}$. Then, the explicit fiber-preserving
dominating map from $\mathbb{C}^{2}$ to
$(\mathbb{C}\times\mathbb{P}^{1})\setminus D$ defined in (3) can be
similarly obtained using (only) (1) and the flow of any $W^{u}$ of
$\{W^{u}\}_{u\in\mathbb{C}(z)}$.
\end{corollary}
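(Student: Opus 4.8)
The plan is to mirror the argument used for the Corollary of Section~2, where the Buzzard--Lu map $\Phi$ was identified with $f^{-g_1}$ by comparing explicit formulas; here the role of the linear flow of $Z^u$ is played by the Riccati flow of $W^u$. First I would integrate $W^u$ explicitly along each vertical line $\{z=z_0\}$. Restricted to a fiber, $W^u$ gives the Riccati equation $dw/dt=e^{u(z_0)}[a(z_0)w^2+b(z_0)w+c(z_0)]$, whose flow on $\mathbb{P}^1$ is a one-parameter group of M\"obius transformations. Writing $aw^2+bw+c=a(w-r_1)(w-r_2)$ in terms of the two roots $r_1,r_2$ of the quadratic (the points where $D$ meets the fiber), the solution through $w(0)=w_0$ is obtained by separation of variables in the form $(w-r_1)/(w-r_2)=[(w_0-r_1)/(w_0-r_2)]\,e^{e^{u(z_0)}a(z_0)(r_1-r_2)t}$, which exhibits the flow $\psi^u$ explicitly and confirms its completeness (already granted by Proposition~\ref{P2}).

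Next I would substitute the initial condition $w(0)=\sigma(z)$ supplied by~(1) to obtain the closed form of $g^u(z,t)=\psi^u(t,z,\sigma(z))$, and compare it with the map constructed in~(3). To make the comparison transparent I would pass to the normal-form coordinates of~(2), in which $D=\{w^2=h(z)\}$ (so $a\equiv1$, $b\equiv0$, $c=-h$, and $r_{1,2}=\pm\sqrt{h}$) and $\sigma\equiv s_\infty$, i.e. $w_0=\infty$. The flow then reduces to $(w-\sqrt h)/(w+\sqrt h)=e^{2\sqrt h\,e^{u(z)}t}$, whose solution $w(t)=\sqrt h\,(1+e^{2\sqrt h\,e^{u(z)}t})/(1-e^{2\sqrt h\,e^{u(z)}t})$ is precisely the fiber-preserving dominating map that Buzzard and Lu write down in~(3). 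This identifies the Buzzard--Lu map with $g^u$ for the corresponding choice of $u$.

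The content of the corollary is then the observation that, although the verification above is most easily carried out in the coordinates of~(2), the construction of $g^u$ in Theorem~\ref{T2} never used~(2): it used only the function $\sigma$ from~(1) and the flow $\psi^u$, whose existence is guaranteed by the completeness of $W^u$ established in Proposition~\ref{P2} in arbitrary coordinates. Hence the explicit dominating map of~(3) is recovered from~(1) and the flow of any $W^u$ alone, which is exactly the assertion.

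The step I expect to be the main obstacle is controlling the degenerate fibers so that the resulting map is genuinely holomorphic on all of $\mathbb{C}^2$. Two phenomena must be handled: fibers where $D$ meets $\{z=z_0\}$ in a single point (a double root $r_1=r_2$, giving a trajectory of type $\mathbb{C}$), where the M\"obius formula degenerates and must be replaced by its limiting value; and the values of $z$ where $\sigma(z)=\infty$, so that the initial point lies at the pole of the affine chart. As in Section~2---where $\phi(z,w)$ remained entire and equaled $h(z)-w$ on $\{g=0\}$---one must check by analytic continuation that the several limiting expressions glue to a single holomorphic map in $(z,t)$, which is what makes $g^u$ well defined and fiber-preserving across the whole plane.
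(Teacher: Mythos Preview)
Your proposal is correct and is exactly the natural elaboration of what the paper intends. In the paper itself Corollary~\ref{C2} is stated with no proof at all: it is presented as an immediate consequence of Theorem~\ref{T2} (whose proof is also ``left to the reader''), the point being simply that the maps $g^{u}(z,t)=\psi^{u}(t,z,\sigma(z))$ of Theorem~\ref{T2} are built from~(1) and the completeness of $W^{u}$ alone, bypassing the coordinate change~(2) that Buzzard and Lu use to reach~(3). Your plan---integrate the Riccati equation fiberwise, substitute $w(0)=\sigma(z)$, and identify the result with the map of~(3) after passing to the normal form of~(2)---is precisely the Section~3 analogue of the explicit computation carried out for $f^{u}$ in Theorem~\ref{T1} and its Corollary, and is what a reader filling in the omitted details would do. Your identification of the degenerate fibers (double root of the quadratic, and $\sigma(z)=\infty$) as the places where holomorphy must be checked by analytic continuation is also the right concern, and parallels the remark in~(II.2) that $\phi(z,w)$ remains entire with value $h(z)-w$ on $\{g=0\}$.
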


\subsubsection{Some applications}

\begin{proposition}
Let $s:\mathbb{C}\to\mathbb{P}^{1}$ be a meromorphic function. Then,

\begin{enumerate}
  \item[(i)] There exist $h\in\mathbb{C}(z)$ and a meromorphic function
$\hat{s}:\mathbb{C} \to \mathbb{P}^{1}$ such that
$\textnormal{graph}(s)\cap \textnormal{graph}(h)$,
$\textnormal{graph}(h)\cap \textnormal{graph}(\hat{s})$ and
$\textnormal{graph}(\hat{s})\cap \textnormal{graph}(s)$ are empty.
  \item[(ii)]
   There exists a complete vector field
$\hat{W}$ on $\mathbb{C}\times\mathbb{P}^{1}$, vanishing only
along $\textnormal{graph}(h)\cup\textnormal{graph}(\hat{s})$,
whose trajectories are all of type $\mathbb{C}^{\ast}$.
  \item[(iii)] There exist  a double section $\hat{D}\subset\mathbb{C}\times \mathbb{P}^{1}$
over $\mathbb{C}$ and a fiber-preserving holomorphic map from $\mathbb{C}^{2}$ onto
$(\mathbb{C}\times\mathbb{P}^{1})\setminus \hat{D}$ of the form
$F(z,t)=(z,H(z,t))$, such that $F$ on each vertical fiber
$\{z=z_{0}\}$ is of the form  $e^{\,c_{z_{0}}t}$, with a constant
$c_{z_{0}}\neq{0}$ depending on $z_{0}$, modulo an automorphism
of $\mathbb{P}^{1}$ mapping the two points
in $\hat{D}\cap \{z=z_{0}\}$ to  $0$ and $\infty$.
\end{enumerate}
\end{proposition}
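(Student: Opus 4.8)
The plan is to derive all three parts from the double-section results of this section, using the entire function $h$ of (II.1) as a bridge; the only genuinely new input is to regard a pair of disjoint sections as a single double section. For (i) I would first fix $h\in\mathbb{C}(z)$ as in (II.1), so that $\textnormal{graph}(h)\cap\textnormal{graph}(s)=\emptyset$. Writing $s=q/q_{1}$ with $q,q_{1}\in\mathbb{C}(z)$ coprime, clearing denominators in $(w-s)(w-h)$ produces $q_{1}w^{2}-(q+q_{1}h)\,w+qh$, whose zero locus in $\mathbb{C}\times\mathbb{P}^{1}$ is exactly $\textnormal{graph}(s)\cup\textnormal{graph}(h)$; its coefficients are entire, so this is a double section $D'$ over $\mathbb{C}$. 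I would then apply step (1) (that is, \cite[Theorem\,1.3]{BL2}) to $D'$, obtaining a holomorphic $\hat s:\mathbb{C}\to\mathbb{P}^{1}$, i.e.\ a meromorphic function, whose graph misses $D'$ and hence misses both $\textnormal{graph}(s)$ and $\textnormal{graph}(h)$. Combined with $\textnormal{graph}(s)\cap\textnormal{graph}(h)=\emptyset$, this yields the three empty intersections of (i).

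For (ii) I would set $\hat D=\textnormal{graph}(h)\cup\textnormal{graph}(\hat s)$; writing $\hat s=p/p_{1}$ with $p,p_{1}$ coprime it is the double section $\{p_{1}w^{2}-(p+p_{1}h)\,w+ph=0\}$, and I take $\hat W$ to be the field $W^{u}$ attached to $\hat D$ by Proposition~\ref{P2} (say $u=0$). By that proposition $\hat W$ is complete, vanishes precisely along $\hat D=\textnormal{graph}(h)\cup\textnormal{graph}(\hat s)$, and is of type $\mathbb{C}^{\ast}$. The extra point to check is that \emph{every} trajectory, not merely the generic one, is of type $\mathbb{C}^{\ast}$: by (i) the two points $h(z_{0}),\hat s(z_{0})$ of $\hat D\cap\{z=z_{0}\}$ are distinct for every $z_{0}$, with $h(z_{0})$ always finite and the only possible value $\infty$ occurring at a pole of $\hat s$, so the last bullet in the proof of Proposition~\ref{P2} gives type $\mathbb{C}^{\ast}$ over each fiber. (Coprimality of $p,p_{1}$ also rules out $p_{1}(z_{0})=p(z_{0})=0$, so $\hat D$ never degenerates.)

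For (iii) I would keep $\hat D$ and run the flow of $\hat W$ starting from the section $s$, which by (i) is disjoint from $\hat D$; this is exactly the construction of Theorem~\ref{T2} with $s$ in the role of $\sigma$. Completeness and the $\mathbb{C}^{\ast}$-type of $\hat W$ make $F(z,t)=\psi^{u}(t,z,s(z))=(z,H(z,t))$ a fiber-preserving holomorphic surjection from $\mathbb{C}^{2}$ onto $(\mathbb{C}\times\mathbb{P}^{1})\setminus\hat D$. To read off the fiberwise form, fix $z_{0}$ and choose a M\"obius map $M$ carrying $h(z_{0}),\hat s(z_{0})$ to $0,\infty$; then $M$ pushes $\hat W\!\mid_{\{z=z_{0}\}}$ to $c_{z_{0}}\,w\,\partial/\partial w$ with $c_{z_{0}}\neq0$, since a $\mathbb{C}^{\ast}$-field on $\mathbb{P}^{1}$ with two simple zeros is a nonzero multiple of $w\,\partial/\partial w$. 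Its flow is $w\mapsto w\,e^{c_{z_{0}}t}$, so $M(H(z_{0},t))=M(s(z_{0}))\,e^{c_{z_{0}}t}$, which is the asserted form.

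The denominator-clearing and the fiberwise linearization are routine. The weight of the argument sits in (i), and there it is discharged by step (1): the thing to get right is that $\textnormal{graph}(s)\cup\textnormal{graph}(h)$ really is a double section to which \cite[Theorem\,1.3]{BL2} applies, so that the single output $\hat s$ is simultaneously disjoint from $s$ and from $h$. The remaining point needing care is the verification in (ii) that $\hat D$ meets every fiber in two distinct points, including over the poles of $\hat s$, which is exactly what upgrades the conclusion from \emph{generically} $\mathbb{C}^{\ast}$ to $\mathbb{C}^{\ast}$ on all trajectories.
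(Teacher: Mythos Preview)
Your proof is correct and follows essentially the same route as the paper: in (i) you form the double section $D=\{q_{1}(w-h)(w-s)=0\}$ and take $\hat{s}=\sigma$ from \cite[Theorem\,1.3]{BL2}; in (ii) you let $\hat{W}$ be the $W^{0}$ attached to $\hat{D}=\textnormal{graph}(h)\cup\textnormal{graph}(\hat{s})$; in (iii) you set $F(z,t)=\hat{\varphi}(t,z,s(z))$. Your write-up is in fact more complete than the paper's, since you explicitly verify that every fiber of $\hat{D}$ consists of two distinct points (giving type $\mathbb{C}^{\ast}$ on \emph{all} trajectories, not just generically) and you spell out the M\"obius linearization that produces the $e^{c_{z_{0}}t}$ form on each fiber.
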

\begin{proof}
Let $s=q/q_{1}$ and $h=s-1/g$ for $g,q,\,q_{1}\in\mathbb{C}(z)$ as
in (II.1). To deduce (i), let $\hat{s}=\sigma$, where
$\sigma:\mathbb{C}\to\mathbb{P}^{1}$ is the map whose graph does
not meet the double section $D=\{q_{1}(z)(w-h(z))(w-s(z))=0\}$
\cite[Theorem\,1.3]{BL2}.

To prove (ii),
take $\hat{s}=\hat{q}/\hat{q}_{1}$ with $\hat{q}$, $\hat{q}_{1}\in\mathbb{C}(z)$ without common zeros,
and define $\hat{W}=[\hat{q}_{1}(z)(w-h(z))(w-\hat{s}(z))]\partial/\partial w.$

To prove (iii), define the double section $\hat{D}\subset\mathbb{C}\times \mathbb{P}^{1}$
over $\mathbb{C}$ given by the set of
zeros of $\hat{W}$, and $F(z,t)=\hat{\varphi}(t,z,s(z))$ with
$\hat{\varphi}$ the flow of $\hat{W}$. \em
\end{proof}

\section{Complete vector fields tangent to graphs}

Let $X$ be a holomorphic vector field on $\mathbb{C}^{2}$.
We will denote by $\mathcal{F}(X)$ the holomorphic foliation defined by $X$. Note that $\mathcal{F}(X)$ is a foliation by curves on $\mathbb{C}^{2}$ with isolated singularities.

Let $s:\mathbb{C}\to\mathbb{P}^{1}$ be a meromorphic function. From Proposition\,\ref{P1}, it follows that each
vector field $Z^{u}$ on $\mathbb{C}^{2}\setminus
\textnormal{graph}(s)$ is complete and never vanishes. Note that  $\mathcal{F}(Z^{u})$ is defined
by vertical lines and is transversal to $\textnormal{graph}(s)$.

One natural question is to ask whether there is a complete
holomorphic vector field $X$ on $\mathbb{C}^{2}$ different from
$Z^{u}$ such that when it is restricted to
$\mathbb{C}^{2}\setminus \textnormal{graph}(s)$ is complete and
never vanishes. There are two possibilities for such an $X$ to be
analyzed.

$\bullet$ If $X$ is identically zero on $\textnormal{graph}(s)$ we can assume that $\textnormal{graph}(s)$ is
invariant by  $\mathcal{F}(X)$, for otherwise (the proof of) Proposition\,\ref{P4} implies that $X$ is of the form $Z^{u}$ after an analytic automorphism.
If moreover $X$ verifies one of the following properties:
\begin{enumerate}
  \item [{-}] $X$ defines a quasi-algebraic flow \cite{Suzuki-anales},
  \item [{-}] $X$ has a meromorphic first integral \cite{Suzuki-anales}, or
  \item [{-}] $X$ is polynomial \cite{Brunella-topology},
\end{enumerate}
one concludes that $s$ has at most one pole because its set of zeros is an algebraic curve in
$\mathbb{C}^{2}$ with components of type $\mathbb{C}$ or
$\mathbb{C}^{\ast}$, and thus $\textnormal{graph}(s)$ is of type
$\mathbb{C}$ or $\mathbb{C}^{\ast}$.

On the other hand, all known complete vector fields verify one of
the above properties. Nevertheless, in general, it is unknown
whether $s$ can have more than one pole.

\begin{question}
Is there a complete holomorphic vector field $X$ on $\mathbb{C}^{2}$,
identically zero on $\textnormal{graph}(s)$, such that $\textnormal{graph}(s)$ is invariant by $\mathcal{F}(X)$,
when $s$ has more than one pole?
\end{question}

$\bullet$ If $X$ is not identically zero on $\textnormal{graph}(s)$, $X$ has
at most isolated zeros on $\textnormal{graph}(s)$. Then,  $X$ is tangent to
$\textnormal{graph}(s)$, and $\textnormal{graph}(s)$ is invariant
by $\mathcal{F}(X)$. \em In this section, we will work under these
assumptions. \em

Let us see in following lemma that $s$ has at most one
pole and that $\textnormal{graph}(s)$ is of type $\mathbb{C}$ or $\mathbb{C}^{\ast}$.

\begin{lemma}\label{Superficies R} Let $s:\mathbb{C}\to\mathbb{P}^{1}$ be a meromorphic function. If there is
a complete holomorphic vector field $X$ on $\mathbb{C}^{2}$, which is tangent to $\textnormal{graph}(s)$ and with at most isolated zeros
on $\textnormal{graph}(s)$, then $s$ has at most one pole. Let us consider the trajectory $C_{z}$ of $X$ contained in $\textnormal{graph}(s)$, then

\noindent $-$ If $s\in\mathbb{C}(z)$ (no poles), $C_{z}$ is of type $\mathbb{C}$
or $\mathbb{C}^{\ast}$,
being $C_{z}$ respectively equal to $\textnormal{graph}(s)$ or $\textnormal{graph}(s)\setminus\{p\}$,
where $p$ is the unique zero of $X$ over $\textnormal{graph}(s)$, or\\
\noindent
$-$ If $s$
has only one pole,
$C_{z}$ is equal to $\textnormal{graph}(s)$ and of type $\mathbb{C}^{\ast}$.\em
\end{lemma}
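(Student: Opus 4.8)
The plan is to reduce the whole statement to the conformal type of $\textnormal{graph}(s)$ after deleting the zeros of $X$ lying on it, and then to invoke the trichotomy for complete flows on Stein surfaces recalled in the introduction. First I would describe $\textnormal{graph}(s)=\{(z,s(z)):s(z)\neq\infty\}\subset\mathbb{C}^{2}$ as an abstract Riemann surface: the first projection identifies it biholomorphically with $\mathbb{C}\setminus P$, where $P\subset\mathbb{C}$ is the (discrete, a priori possibly infinite) set of poles of $s$. Since $X$ is tangent to $\textnormal{graph}(s)$ with at most isolated zeros there, the curve is invariant by $\mathcal{F}(X)$ and the set $Z$ of zeros of $X$ lying on $\textnormal{graph}(s)$ is discrete.

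The key structural step is to show that the trajectory $C_{z}\subset\textnormal{graph}(s)$ is exactly $\textnormal{graph}(s)\setminus Z$. Because $X$ is complete on $\mathbb{C}^{2}$ and $\textnormal{graph}(s)$ is invariant, the restricted flow is a complete $\mathbb{C}$-action on the smooth curve $\textnormal{graph}(s)\setminus Z$, on which $X$ never vanishes; each orbit $\varphi_{z}(\mathbb{C})$ is then open (the flow is a local biholomorphism, since its derivative is the nonvanishing $X$), and distinct orbits are disjoint. Here I would use that removing a discrete set of points from a connected Riemann surface leaves it connected, so $\textnormal{graph}(s)\setminus Z$ is connected; an open partition of a connected space having a single block, the unique orbit coincides with $\textnormal{graph}(s)\setminus Z$, that is, with $C_{z}$.

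Then I apply the property from the introduction: since $\mathbb{C}^{2}$ is Stein and $X$ is complete, $C_{z}$ is of type $\mathbb{C}$ or $\mathbb{C}^{\ast}$. On the other hand, as a Riemann surface $C_{z}\cong\mathbb{C}\setminus(P\cup Z')$ is the complement of $N:=|P|+|Z|$ points of $\mathbb{C}$ (the poles and the projected zeros $Z'$ are distinct, poles lying at $w=\infty$ and zeros at finite points), whose fundamental group is free of rank $N$. Comparing with $\pi_{1}(\mathbb{C})=0$ and $\pi_{1}(\mathbb{C}^{\ast})=\mathbb{Z}$ forces $N\le 1$; in particular $|P|\le 1$, which is the assertion that $s$ has at most one pole. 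The remaining claims follow from the case analysis $N=|P|+|Z|\in\{0,1\}$: if $s$ has no pole then either $Z=\emptyset$ and $C_{z}=\textnormal{graph}(s)$ is of type $\mathbb{C}$, or $Z=\{p\}$ and $C_{z}=\textnormal{graph}(s)\setminus\{p\}$ is of type $\mathbb{C}^{\ast}$; while if $s$ has exactly one pole then $Z=\emptyset$ and $C_{z}=\textnormal{graph}(s)$ is of type $\mathbb{C}^{\ast}$.

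I expect the main obstacle to be the structural step identifying $C_{z}$ with the entire punctured graph: making the single-leaf argument precise (completeness of the restricted flow, openness of orbits, connectedness after deleting the discrete set $Z$). Once this is settled, the final numerology is immediate, the real content residing in the classical conformal input that $\mathbb{C}$ minus $N$ points is biholomorphic to $\mathbb{C}$ only for $N=0$ and to $\mathbb{C}^{\ast}$ only for $N=1$ (equivalently, in the rank of its fundamental group), which rules out $|P|\ge 2$ and, more generally, an infinite pole set.
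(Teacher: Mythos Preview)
Your proof is correct and follows essentially the same route as the paper's: identify the trajectory $C_{z}$ with $\textnormal{graph}(s)\setminus Z$, invoke the $\mathbb{C}/\mathbb{C}^{\ast}$ dichotomy for trajectories of complete fields on a Stein surface, and conclude that $|P|+|Z|\le 1$. The only cosmetic difference is in the final counting step: you read off the bound from $\pi_{1}(\mathbb{C}\setminus\{N\text{ points}\})$, whereas the paper phrases it as an extension of $X|_{C_{z}}$ to a complete vector field on $\textnormal{graph}(s)$ and argues that adding two or more fixed points to a $\mathbb{C}$- or $\mathbb{C}^{\ast}$-orbit would force $\textnormal{graph}(s)\simeq\mathbb{P}^{1}$ or destroy completeness.
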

\begin{proof}
Let $P\subset \mathbb{C}^{2}$ be the set of zeros of $X$.
Then $C_{z}=\textnormal{graph}(s)\setminus P$ is of type $\mathbb{C}$ or $\mathbb{C}^{\ast}$.
As $X_{|C_{z}}$ is complete, it extends as a complete holomorphic vector field
on $\textnormal{graph}(s)$ making zeros on $\textnormal{graph}(s)\cap P$.

If $C_{z}\simeq \mathbb{C}$, then $\textnormal{graph}(s)\cap P$ is empty. Otherwise
$\textnormal{graph}(s)\simeq \mathbb{P}^{1}$, which is not possible. Then $C_{z}=\textnormal{graph}(s)$ and  $s\in\mathbb{C}(z)$.

If $C_{z}\simeq \mathbb{C}^{\ast}$, then $\textnormal{graph}(s)\cap P$ is empty or has only one point $p$. Otherwise $X$ is not complete on $\textnormal{graph}(s)$. In the first case,
$C_{z}=\textnormal{graph}(s)$ and $s$ has only one pole. In the second case, $C_{z}\cup\{p\}=\textnormal{graph}(s)\simeq\mathbb{C}$
and  $s\in\mathbb{C}(z)$.
\end{proof}

\medskip

\subsection{Vector fields tangent to $\textnormal{graph}(s)$ if  $s\in\mathbb{C}(z)$}
Let us construct two families of complete holomorphic vector fields on $\mathbb{C}^{2}$ tangent to $\textnormal{graph}(s)$ when $s\in\mathbb{C}(z)$.

\begin{proposition}\label{TtipoC}
Let $s:\mathbb{C}\to \mathbb{C}$ be a holomorphic map. Then,
there are two families of complete vector fields, which are tangent to $\textnormal{graph}(s)$ and
analytically equivalent by a fiber-preserving automorphism that takes $\textnormal{graph}(s)$ into
the line $\{t=0\}$
to one of the polynomial vector fields: \\
\noindent $\mathbf{(i)}$
$$
X_{1}= (ax +b)\frac{\partial}{\partial x} + A(x)t
\frac{\partial}{\partial t}
$$
where $a,b\in \mathbb{C},$ $A\in\mathbb{C}[x]$,
$A\not\equiv{0}$, or \\
\noindent $\mathbf{(ii)}$
$$
X_{1}=at\frac{\partial}{\partial t} + A(x^{m}t^{n})\cdot \left(    nx
\frac{\partial}{\partial x}  - mt \frac{\partial}{\partial t}
\right)
$$
where $a\in\mathbb{C}$, $m,n\in\mathbb{N}^{+}$, $(m,n)=1$,
$A\in\mathbb{C}[y]$, $A\not\equiv{0}$, and $y=x^{m}t^{n}$.
\end{proposition}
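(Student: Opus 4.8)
The plan is to obtain both families as pullbacks of the two polynomial normal forms by a single fiber-preserving automorphism, so that the whole statement reduces to (a) exhibiting that automorphism and (b) checking that the polynomial field $X_{1}$ in $\mathbf{(i)}$ and in $\mathbf{(ii)}$ is complete on $\mathbb{C}^{2}$ and leaves $\{t=0\}$ invariant. Since $s:\mathbb{C}\to\mathbb{C}$ is entire, the map
$$\Theta(z,w)=(z,\,w-s(z)),\qquad \Theta^{-1}(x,t)=(x,\,t+s(x)),$$
is a fiber-preserving automorphism of $\mathbb{C}^{2}$ carrying $\textnormal{graph}(s)=\{w=s(z)\}$ onto $\{t=0\}$. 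Because completeness and tangency to an invariant curve are preserved under biholomorphisms, the vector field tangent to $\textnormal{graph}(s)$ that is analytically equivalent to $X_{1}$ is exactly $(\Theta^{-1})_{*}X_{1}$, a holomorphic field on $\mathbb{C}^{2}$; letting the parameters $(a,b,A)$ in $\mathbf{(i)}$ and $(a,m,n,A)$ in $\mathbf{(ii)}$ vary then produces the two announced families. Tangency is immediate in both cases, since the $\partial/\partial t$-component of $X_{1}$ is divisible by $t$ (namely $A(x)t$ in $\mathbf{(i)}$ and $(a-mA(x^{m}t^{n}))t$ in $\mathbf{(ii)}$), so $\{t=0\}$ is invariant by $\mathcal{F}(X_{1})$.

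For completeness in case $\mathbf{(i)}$ I write the flow in complex time $\tau$. The first equation $\dot{x}=ax+b$ is linear, so its solution $x(\tau)$ is entire on $\mathbb{C}$; substituting into $\dot{t}=A(x(\tau))\,t$ gives a linear equation in $t$ whose coefficient $A(x(\tau))$ is entire, hence the solution $t(\tau)=t_{0}\exp\!\big(\int_{0}^{\tau}A(x(\sigma))\,d\sigma\big)$ is again entire. Every trajectory is therefore defined for all $\tau\in\mathbb{C}$, and $X_{1}$ is complete.

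For case $\mathbf{(ii)}$ the crucial remark is that $y=x^{m}t^{n}$ is a semi-invariant: a direct computation gives $X_{1}(y)=an\,y$, so along any trajectory $y(\tau)=y_{0}\,e^{an\tau}$ is explicit and entire. Feeding this back, the remaining two equations decouple into the linear equations $\dot{x}=nA(y(\tau))\,x$ and $\dot{t}=\big(a-mA(y(\tau))\big)\,t$, each with an entire coefficient in $\tau$, whence $x(\tau)$ and $t(\tau)$ are entire; a short check shows $x(\tau)^{m}t(\tau)^{n}=y_{0}e^{an\tau}=y(\tau)$, confirming consistency with the first integral. Thus the flow exists for all $\tau\in\mathbb{C}$ and $X_{1}$ is complete.

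The main obstacle is precisely this second case: the coupling through $A(x^{m}t^{n})$ prevents a direct integration of the system. The key that unlocks the computation is the observation that $x^{m}t^{n}$ itself obeys a linear equation $\dot{y}=an\,y$ along the flow; once $y(\tau)$ is known explicitly, the system in $(x,t)$ becomes linear with entire coefficients, and completeness follows as in case $\mathbf{(i)}$.
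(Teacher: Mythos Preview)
Your argument is correct and follows the same skeleton as the paper: define the fiber-preserving automorphism $\phi(x,t)=(x,t+s(x))$ (your $\Theta^{-1}$), observe that $\{t=0\}$ is invariant for $X_{1}$ because the $\partial/\partial t$-component is divisible by $t$, and transport $X_{1}$ by $\phi_{\ast}$ to obtain the announced families tangent to $\textnormal{graph}(s)$.

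The one substantive difference is how completeness of $X_{1}$ is established. The paper simply invokes Brunella's classification of complete polynomial vector fields on $\mathbb{C}^{2}$, where both normal forms $(i)$ and $(ii)$ appear explicitly. You instead give a self-contained integration: case $(i)$ is triangular-linear, and in case $(ii)$ you exploit the identity $X_{1}(x^{m}t^{n})=an\,x^{m}t^{n}$ to reduce the coupled system to two scalar linear equations with entire coefficients. This direct route is perfectly valid and has the advantage of not relying on a deep classification result; the citation, on the other hand, situates these examples within the general picture and is shorter. Your consistency check $x(\tau)^{m}t(\tau)^{n}=y_{0}e^{an\tau}$ is exactly what is needed to close the argument, since it guarantees that the global solution of the auxiliary linear system really solves the original nonlinear one.
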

\begin{proof}
A polynomial vector field $X_{1}$ as $(i)$ or $(ii)$ is complete
\cite[Theorem]{Brunella-topology} and leaves invariant $\{t=0\}$.
On the other hand, if $\phi$ is the automorphism  given by
$\phi(x,t)=(x,t+s(x))=(z,w)$, since $\phi(\{t=0\})=
\textnormal{graph}(s)$, it is enough to define the two families as
$\phi_{\ast} X_{1}$, with $X_{1}$ as $(i)$ or $(ii)$.
\end{proof}
\begin{proposition}\label{TtipoC1}
Let $s:\mathbb{C}\to \mathbb{C}$ be a holomorphic map. Then, there
is a family of complete vector fields  tangent to
$\textnormal{graph}(s)$, which have all their trajectories proper
and of type $\mathbb{C}$.
\end{proposition}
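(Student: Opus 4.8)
The plan is to extract the desired family from the family $\mathbf{(i)}$ of Proposition~\ref{TtipoC} by fixing the linear part in the base variable to a nonzero constant. Concretely, first I would work with the model
$$X_{1}=b\,\frac{\partial}{\partial x}+A(x)\,t\,\frac{\partial}{\partial t},\qquad b\in\mathbb{C}^{\ast},\ A\in\mathbb{C}[x],\ A\not\equiv 0,$$
which is one of the vector fields of type $\mathbf{(i)}$ (the case $a=0$), hence complete and leaving $\{t=0\}$ invariant. The idea is that, since the fiber-preserving automorphism $\phi(x,t)=(x,t+s(x))=(z,w)$ of Proposition~\ref{TtipoC} is a biholomorphism of $\mathbb{C}^{2}$ carrying $\{t=0\}$ onto $\textnormal{graph}(s)$, it preserves completeness, the type of each trajectory, and properness; therefore it suffices to check that \emph{every} trajectory of $X_{1}$ is proper and of type $\mathbb{C}$, and then to declare the sought family to be $\{\phi_{\ast}X_{1}\}$ as $A$ ranges over $\mathbb{C}[x]\setminus\{0\}$.

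The second step is the explicit integration of $X_{1}$. Since the $\partial/\partial x$-component is the nonzero constant $b$, the base flow is the translation $x(\tau)=x_{0}+b\tau$, which parametrizes all of $\mathbb{C}$; solving the resulting linear equation $t'(\tau)=A(x(\tau))\,t(\tau)$ gives $t(\tau)=t_{0}\exp\!\big(\int_{0}^{\tau}A(x_{0}+b\sigma)\,d\sigma\big)$. Both components are entire in $\tau$, so $X_{1}$ is complete. Writing $B$ for a primitive of $A$, the trajectory through $(x_{0},t_{0})$ with $t_{0}\neq 0$ is the graph $\{(x,c\,e^{B(x)}):x\in\mathbb{C}\}$ of an entire function (with $c=t_{0}e^{-B(x_{0})}$), hence biholomorphic to $\mathbb{C}$ via the $x$-coordinate and closed in $\mathbb{C}^{2}$, i.e. proper; the trajectory $\{t=0\}$ is the line itself, again of type $\mathbb{C}$ and proper. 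As $X_{1}$ has no zeros (its first component never vanishes), these graphs exhaust the leaves of $\mathcal{F}(X_{1})$, so \emph{all} trajectories are proper and of type $\mathbb{C}$.

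Transporting by $\phi$ then yields the family $\{\phi_{\ast}X_{1}\}$ of complete vector fields tangent to $\textnormal{graph}(s)$ with all trajectories proper and of type $\mathbb{C}$, as required (the hypothesis $s\in\mathbb{C}(z)$ guaranteeing, via Lemma~\ref{Superficies R}, that $\textnormal{graph}(s)$ is itself of type $\mathbb{C}$). The only delicate point is to secure type $\mathbb{C}$ \emph{simultaneously} with properness for every leaf: this is precisely why I take $a=0$, for a nonzero linear term $ax$ would make the base flow $x(\tau)=-b/a+(x_{0}+b/a)e^{a\tau}$ exponential, producing generic leaves of type $\mathbb{C}^{\ast}$ rather than $\mathbb{C}$. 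With the translation base flow this obstruction disappears, and properness is automatic because each leaf is a graph of an entire function.
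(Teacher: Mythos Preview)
Your proposal is correct and follows exactly the same approach as the paper: take $X_{1}$ of type $\mathbf{(i)}$ with $a=0$, $b\neq 0$, and push forward by $\phi$. The paper's proof is a single sentence stating this choice, whereas you supply the explicit integration and the verification that every trajectory is the graph of an entire function (hence proper and of type $\mathbb{C}$); your remark that $a\neq 0$ would spoil the type is also implicit in the paper's choice. The appeal to Lemma~\ref{Superficies R} at the end is unnecessary---since $s$ is entire, $\textnormal{graph}(s)$ is trivially biholomorphic to $\mathbb{C}$ via the first projection---but this does not affect the argument.
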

\begin{proof}
It is sufficient to take  $\phi_{\ast} X_{1}$ as in proof of
Proposition\,\ref{TtipoC}, with $X_{1}$ as $(i)$, $a=0$ and
$b\neq{0}$.
\end{proof}

\begin{proposition}\label{TtipoC2}
Let us consider $s\in\mathbb{C}(z)$ and $p\in
\textnormal{graph}(s)$. Then, there is a family of complete
holomorphic vector fields tangent to $\textnormal{graph}(s)$ and
vanishing only at $p$. Moreover, infinitely many of them are of type $\mathbb{C}$ and with almost all their
trajectories non-proper, and infinitely many other of them are of type $\mathbb{C}^{\ast}$. Therefore,
there are infinitely many vector fields of this family not equivalent by an analytic
automorphism.
\end{proposition}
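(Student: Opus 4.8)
The plan is to transport the problem to the invariant line $\{t=0\}$ by the shear automorphism, exactly as in the proofs of Propositions~\ref{TtipoC} and~\ref{TtipoC1}, and then to read off everything from an explicit affine-linear model. Write $p=(z_{0},s(z_{0}))$ and let $\phi(x,t)=(x,t+s(x))=(z,w)$ be the fiber-preserving automorphism of $\mathbb{C}^{2}$, so that $\phi(\{t=0\})=\textnormal{graph}(s)$ and $\phi(z_{0},0)=p$. Since $\phi$ is a biholomorphism, it preserves completeness, invariance of curves, the location of zeros, and the type and (non)properness of trajectories. Hence it suffices to produce, on $\mathbb{C}^{2}$, a family of complete vector fields leaving $\{t=0\}$ invariant and vanishing only at $(z_{0},0)$, with the prescribed types, and then to take the push-forwards by $\phi$.

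For the model I would take the one-parameter family
$$
\tilde{X}_{\lambda}=(x-z_{0})\frac{\partial}{\partial x}+\lambda\, t\,\frac{\partial}{\partial t},\qquad \lambda\in\mathbb{C}\setminus\{0\},
$$
which is of type $(i)$ in Proposition~\ref{TtipoC} with $ax+b=x-z_{0}$ and $A\equiv\lambda$; it is therefore complete by \cite[Theorem]{Brunella-topology}, it leaves $\{t=0\}$ invariant, and it vanishes only at $(z_{0},0)$ because $A(z_{0})=\lambda\neq0$. Its flow is $(x,t)\mapsto(z_{0}+e^{\tau}(x-z_{0}),e^{\lambda\tau}t)$, so the trajectory through a point off the two invariant lines $\{x=z_{0}\}$ and $\{t=0\}$ is $\mathbb{C}$ modulo the period lattice $\{\tau:e^{\tau}=1,\ e^{\lambda\tau}=1\}=\{2\pi ik: \lambda k\in\mathbb{Z}\}$. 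The key elementary computation is that this lattice is trivial when $\lambda\notin\mathbb{Q}$, so $\tilde{X}_{\lambda}$ is of type $\mathbb{C}$, and of rank one when $\lambda\in\mathbb{Q}\setminus\{0\}$, so $\tilde{X}_{\lambda}$ is of type $\mathbb{C}^{\ast}$; in the latter case, writing $\lambda=n/m$ with $(m,n)=1$, the monomial $(x-z_{0})^{m}t^{-n}$ is a meromorphic first integral, so all trajectories are proper.

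The only genuinely geometric point, and the step I expect to require the most care, is the non-properness in the type-$\mathbb{C}$ case. For $\lambda\in\mathbb{R}\setminus\mathbb{Q}$ I would compute the closure of a generic trajectory directly. Setting $\tau=u+iv$ and $\xi=x-z_{0}$, one has $|t|=c\,|\xi|^{\lambda}$ for a constant $c>0$, while $(\arg\xi,\arg t)=(v+\mathrm{const},\ \lambda v+\mathrm{const})$ winds densely around the torus as $v$ ranges over $\mathbb{R}$, the slope $\lambda$ being irrational. Hence the closure of the trajectory is the real hypersurface $\{|t|=c\,|x-z_{0}|^{\lambda}\}$, which is not a complex analytic curve; the trajectory is therefore non-proper. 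Since the only proper trajectories lie in the two invariant lines (a pluripolar set), almost all trajectories are non-proper, as required.

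Finally, for inequivalence I would invoke that the quotient of the two eigenvalues of the linear part at the unique singular point is invariant under analytic automorphisms, up to inversion: an automorphism carrying one of these fields to another must send the single zero to the single zero, and its differential conjugates the linear parts. Thus distinct values of $\lambda$ (modulo $\lambda\mapsto1/\lambda$) give pairwise inequivalent vector fields. Taking $\lambda\in\mathbb{Q}\setminus\{0\}$ produces infinitely many inequivalent members of type $\mathbb{C}^{\ast}$, and $\lambda\in\mathbb{R}\setminus\mathbb{Q}$ produces infinitely many of type $\mathbb{C}$ with almost all trajectories non-proper; type is itself a biholomorphic invariant, so the two subfamilies are mutually inequivalent as well. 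Pushing the whole family $\{\tilde{X}_{\lambda}\}$ forward by $\phi$ yields the desired family tangent to $\textnormal{graph}(s)$, vanishing only at $p$, with infinitely many members of each type and infinitely many not equivalent by any analytic automorphism.
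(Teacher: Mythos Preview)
Your proof is correct and follows the same overall strategy as the paper: reduce to the invariant line $\{t=0\}$ via the shear $\phi$, take model vector fields of type~(i) from Proposition~\ref{TtipoC} with a single zero, and separate the resulting fields by the eigenvalue ratio $\lambda$ at that zero. The difference is in execution. Because your model $\tilde{X}_{\lambda}$ is already globally linear, you compute the flow, the period lattice, and the trajectory closures by hand; the paper instead invokes Poincar\'e's linearization theorem to put $\mathcal{F}(X)$ in the form $x\,\partial/\partial x+\lambda y\,\partial/\partial y$ near $p$ and then cites \cite{Loray} for the local behaviour of the leaves (accumulation on $\{xy=0\}$ when $\lambda\in\mathbb{C}\setminus\mathbb{R}$, closure of real dimension three when $\lambda\in\mathbb{R}\setminus\mathbb{Q}$, punctured disk when $\lambda\in\mathbb{Q}^{+}$). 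Your direct computation is more elementary and self-contained; the paper's local argument, on the other hand, applies uniformly to the larger family including the type-(ii) models and the non-real values of $\lambda$, yielding a somewhat richer stock of examples, though none of that extra generality is needed for the statement as written.
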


\begin{proof}
Let us take $X=\phi_{\ast} X_{1}$ as in proof of
Proposition\,\ref{TtipoC}, with $X_{1}$ as $(i)$, where $a\neq{0}$, $b=0$ and $A(0)\neq{0}$;
or with $X_{1}$ as $(ii)$, where
$a\neq{0}$, $A(0)\neq{0}$ and $a-A(0)m\neq {0}$.  Let $\lambda$ be the
quotient of the eigenvalues of the linear part of $DX(z,w)$ at the
unique zero of $X$, that we can assume as $p$. Note that in the former case
$\lambda=a/A(0)$, and in the latter case $\lambda=A(0)n / (a-A(0)m)$. Let us take
$a$ and $A(0)$ so that $\lambda\not \in \mathbb{R}^{-}\cup\{0\}$.
According to Poincare's Theorem, the foliation $\mathcal{F}(X)$ is
given by $x\partial /
\partial x + \lambda y \partial / \partial y$ in suitable
coordinates $(x,y)$ around $p$
\cite[p.\,10]{Brunella-impa}. In particular, in a neighborhood $U_{p}\subset \mathbb{C}^{2}$ of $p$,
if $C_z$ is a trajectory with $z\in U_{p}$,  $C_{z}\cap U_{p}$  contains a level set of $x^{-\lambda}y$.

\noindent $\bullet$
If $\lambda\in\mathbb{C}\setminus\mathbb{R}$, for any $z\in U_{p}$,
$C_{z}\cap U_{p}$ accumulates $\{xy=0\}$ \cite[p.\,120]{Loray}. Hence, $C_{z}$ is not proper and of type $\mathbb{C}$,
and $X$ is of type $\mathbb{C}$ (see $\S 1$).

\noindent $\bullet$
If $\lambda\in\mathbb{R}\setminus\mathbb{Q}$, for any $z\in U_{p}$, $C_{z}\cap U_{p}$
contains a real subvariety of dimension three \cite[p.\,120]{Loray}.
Hence, $C_{z}$ is not proper and of type $\mathbb{C}$, and $X$ is of type $\mathbb{C}$ (see $\S 1$).

\noindent $\bullet$ If  $\lambda=p/q\in\mathbb{Q}^{+}$, for any $z\in U_{p}$, $C_{z}\cap U_{p}$
is a punctured disk. Hence, $C_{z}$ is proper and of type $\mathbb{C}^{\ast}$, and $X$ is of type $\mathbb{C}^{\ast}$ (see $\S 1$).

Finally, if $\beta(z,w)=(u,v)$ is an analytic automorphism of $\mathbb{C}^{2}$, since the quotient of the eigenvalues
of the linear part $DY(u,v)$ of $Y={\beta}_{\ast}X$ at $p'=\beta(p)$ is $\lambda$, the last sentence of the statement follows. \end{proof}

\subsection{Vector fields tangent to $\textnormal{graph}(s)$ if  $s$ has one pole}
Let us construct two families of complete holomorphic vector
fields on $\mathbb{C}^{2}$ tangent to $\textnormal{graph}(s)$ when
$s$ has one pole.
\begin{theorem}\label{TtipoCast}
Let $s:\mathbb{C}\to\mathbb{P}^{1}$ be a meromorphic function with
only one pole of order $k\in\mathbb{N}^{+}$. Then, there are two
families of complete vector fields of type $\mathbb{C}^{\ast}$,
which are tangent to $\textnormal{graph}(s)$, and analytically
equivalent by a fiber-preserving automorphism that takes
$\textnormal{graph}(s)$ into $\textnormal{graph}(1/z^{k})$
to one of the polynomial vector fields: \\
\noindent $\mathbf{(iii)}$
$$
Y_{0}=a z \frac{\partial}{\partial z} + \\
\left( A(z)w + \frac{- (ak + A(z))}{z^{k}}\right)
\frac{\partial}{\partial w},
$$
with $$A(z)= -ak + A_{0}(z),\,\,\,A_{0}\in z^{k}\cdot
\mathbb{C}[z],\,\,\,or$$

\noindent $\mathbf{(iv)}$
\begin{multline*}
Y_{0}=
a \left( \frac{wz^{k}-1}{z^{k}}\right) \frac{\partial}{\partial w}+ A(z^{m-nk}{(wz^{k}-1)}^{n})\cdot\\
 \cdot\left[
nz  \frac{\partial}{\partial z} + \left( - mw + \frac{(m -
nk)}{z^{k}} \right) \frac{\partial}{\partial w} \right].
\end{multline*}
with $m > nk$ and $$A(y)= a/ (m-nk) + A_{0}(y),\,\,\,A_{0}\in
y^{k}\cdot \mathbb{C}[y],\,\,y=z^{m-nk}{(wz^{k}-1)}^{n}.$$
\end{theorem}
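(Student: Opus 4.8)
The plan is to mirror the proof of Proposition~\ref{TtipoC}, replacing the affine model $\{t=0\}$ by the model curve $\textnormal{graph}(1/z^{k})$. Concretely I would proceed in three steps: (Step~1) produce a fiber-preserving automorphism $\phi$ of $\mathbb{C}^{2}$ carrying $\textnormal{graph}(s)$ onto $\textnormal{graph}(1/z^{k})$; (Step~2) check that the explicit polynomial fields $Y_{0}$ in $(iii)$ and $(iv)$ are genuinely polynomial, tangent to $\textnormal{graph}(1/z^{k})$, complete, and of type $\mathbb{C}^{\ast}$; and (Step~3) transport them back by $\phi^{-1}$. Since an analytic automorphism preserves completeness, the type of a trajectory, and tangency to a curve, Step~3 is immediate once Steps~1--2 are done, and the two announced families tangent to $\textnormal{graph}(s)$ are $\{(\phi^{-1})_{\ast}Y_{0}\}$ for the free data in $(iii)$ and $(iv)$.

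For Step~1 I use that, by hypothesis (and Lemma~\ref{Superficies R}), $s$ has a single pole, of order $k$, which after a translation $z\mapsto z-z_{0}$ (a fiber-preserving automorphism) I place at $z=0$; thus $s(z)=\sum_{j=1}^{k}c_{j}z^{-j}+(\text{entire})$ with $c_{k}\neq0$. I look for $\phi(z,w)=(z,\alpha(z)w+\beta(z))$ with $\alpha=e^{\gamma}$ entire and nowhere zero and $\beta$ entire, subject to $\alpha(z)s(z)+\beta(z)=1/z^{k}$. The only genuine constraint is that the principal part of $\alpha s$ at $0$ be $1/z^{k}$; equating the coefficients of $z^{-k},\dots,z^{-1}$ gives a triangular linear system for the jet $(\alpha(0),\dots,\alpha^{(k-1)}(0))$ whose leading equation is $\alpha(0)c_{k}=1$, hence it is uniquely solvable with $\alpha(0)=1/c_{k}\neq0$. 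I then choose $\gamma$ a polynomial realizing the corresponding $(k-1)$-jet of a branch of $\log\alpha$, set $\beta:=1/z^{k}-\alpha s$ (entire, because the principal parts cancel), and obtain the desired automorphism, whose inverse $(z,w)\mapsto(z,(w-\beta)/\alpha)$ is entire since $\alpha$ never vanishes.

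For Step~2, write $F=wz^{k}-1$, so $\textnormal{graph}(1/z^{k})=\{F=0\}$. The constraints $A=-ak+A_{0}$, $A_{0}\in z^{k}\mathbb{C}[z]$ in $(iii)$, and $A(y)=a/(m-nk)+A_{0}(y)$, $A_{0}\in y^{k}\mathbb{C}[y]$, $y=z^{m-nk}F^{n}$ in $(iv)$, are exactly what make the apparent $z^{-k}$ terms cancel, so $Y_{0}$ is polynomial in both cases; a direct computation yields $Y_{0}F=(ak+A)F$ in $(iii)$ and $Y_{0}F=-(m-nk)A_{0}(y)F$ in $(iv)$, proving tangency to $\{F=0\}$. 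The structural point is that in the coordinate $t=F$ (valid off $\{z=0\}$) these fields become $az\,\partial_{z}+A_{0}(z)t\,\partial_{t}$ and, with $y=z^{m-nk}t^{n}$, the field of type $(ii)$ with exponents $(m-nk,n)$; that is, off $\{z=0\}$ they are precisely Brunella's complete fields $(i)$ and $(ii)$ \cite{Brunella-topology} in disguise. To read off type $\mathbb{C}^{\ast}$ I exhibit first integrals: $H=F\,e^{p(z)}$ with $p'=-A_{0}/(az)$ in case $(iii)$, and $H=F^{n}e^{(m-nk)P(y)}$ with $P'=A_{0}/(ay)$ in case $(iv)$; both are \emph{entire}, satisfy $Y_{0}H\equiv0$, and have generic level sets isomorphic to $\mathbb{C}^{\ast}$ (the fibre over a generic value is the graph $w=(1+c\,e^{-p})/z^{k}$, resp.\ its analogue, parametrized by $z\in\mathbb{C}^{\ast}$). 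Since $Y_{0}$ is nonvanishing on a generic leaf, its generic trajectory fills such a $\mathbb{C}^{\ast}$, so $Y_{0}$ is of type $\mathbb{C}^{\ast}$.

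The main obstacle is completeness on all of $\mathbb{C}^{2}$: the coordinate change $t=wz^{k}-1$ is not an automorphism (it degenerates along $\{z=0\}$), so completeness cannot simply be inherited from Brunella's theorem. I would argue it directly. Off $\{z=0\}$ the solutions are global in complex time: in $(iii)$, $z(\tau)$ solves a linear equation and stays in $\mathbb{C}^{\ast}$, after which $F(\tau)$ solves a linear ODE with entire coefficients and $w=(F+1)/z^{k}$ is entire in $\tau$; in $(iv)$ the functions $y(\tau),z(\tau),F(\tau)$ are obtained successively by integrating linear ODEs with entire coefficients, again with $z$ staying in $\mathbb{C}^{\ast}$. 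The line $\{z=0\}$ is invariant (since $Y_{0}z$ vanishes there), and I would check that the restriction of $Y_{0}$ to $\{z=0\}\cong\mathbb{C}$ is affine in $w$: here the hypotheses $A(0)=-ak$ in $(iii)$, and $m>nk$ with $A(0)=a/(m-nk)$ in $(iv)$, are what force the potentially high-degree contribution of $A_{0}(y)/z^{k}$ to reduce to a constant on $\{z=0\}$ (because $F\equiv-1$ there), so the restricted flow is complete as well. As $\{z=0\}$ and $\{z\neq0\}$ are invariant, cover $\mathbb{C}^{2}$, and $Y_{0}$ is holomorphic throughout, the maximal flow domain is all of $\mathbb{C}^{2}\times\mathbb{C}$; that is, $Y_{0}$ is complete. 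With Steps~1--2 in hand, $(\phi^{-1})_{\ast}Y_{0}$ furnishes, for each choice of the free data, a complete vector field of type $\mathbb{C}^{\ast}$ tangent to $\textnormal{graph}(s)$ and equivalent by $\phi$ to $(iii)$ or $(iv)$, giving the two announced families.
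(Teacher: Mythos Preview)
Your proof is correct and shares the paper's overall three-step architecture: reduce to the model curve $\textnormal{graph}(1/z^{k})$ by a fiber-preserving automorphism, verify the stated properties of $Y_{0}$ on the model, and pull back. The implementations differ, however. For Step~1, the paper (Lemma~\ref{LBL}) invokes the Buzzard--Lu construction (II.1), setting $\varphi(z,w)=(z,e^{-g_{1}(z)}[w-h(z)])$ with $h=s-1/g$; your direct jet argument exploits that $s$ has a single pole and is more elementary here, though (II.1) would still be needed for several poles. For Step~2, the paper derives $Y_{0}$ as $\alpha_{\ast}X_{1}$ where $\alpha(x,t)=(x,t+1/x^{k})$ is a biholomorphism of $\mathbb{C}^{\ast}\times\mathbb{C}$ and $X_{1}$ is one of Brunella's complete fields $(i)$ (with $b=0$) or $(ii)$; the constraints on $A$ then appear naturally as the conditions making $\alpha_{\ast}X_{1}$ extend holomorphically across $\{z=0\}$, and completeness on $\mathbb{C}^{\ast}\times\mathbb{C}$ is inherited from $X_{1}$, so only the invariant line $\{z=0\}$ remains (which you also handle). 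You recognise this same relation but opt to integrate the ODEs directly --- a valid and perhaps more transparent route, especially for case~$(iv)$ where your observation $Y_{0}y=nay$ cleanly unlocks the system. For type $\mathbb{C}^{\ast}$, the paper reads off the eigenvalue ratio $-1/k$ at the unique zero, while your explicit entire first integrals $H=Fe^{p(z)}$ and $H=F^{n}e^{(m-nk)P(y)}$ give the conclusion more directly (and incidentally require $a\neq0$, an assumption you might make explicit). In short: same skeleton, more computational flesh; the paper's approach explains \emph{why} the forms $(iii)$, $(iv)$ arise, yours verifies them efficiently.
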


\medskip

\begin{remark}
\em If $Y_{0}$ is as $(iii)$ or $(iv)$ of
Theorem\,\ref{TtipoCast}, according to \cite[Th\'eor\`eme
4]{Suzuki-anales} (see also $\S$5.1), there is an analytic
automorphism $\phi$ of $\mathbb{C}^{2}$ such that
$${\phi}_{\ast} Y_{0} = \lambda(t) \cdot \left(z  \frac{\partial}{\partial z}  -k w
\frac{\partial}{\partial w}\right),\,\,\lambda
\in\mathbb{C}(t),\,t=wz^{k}.$$

\noindent As a consequence of Theorem\,\ref{TtipoCast}, there are
two complete holomorphic vector fields of type $\mathbb{C}^{\ast}$
tangent to $\textnormal{graph}(s)$, generically transversal, and such that
their corresponding foliations are analytically  but not algebraically equivalent.

\em
\end{remark}

\subsection*{Proof of Theorem\,\ref{TtipoCast}}

We will assume that $z=0$ is the pole of $s$. Then
$s(z)=s_{0}(z)/z^{k}$, with $s_{0}\in\mathbb{C}(z)$,
$s_{0}(0)\neq{0}$ and $k\in\mathbb{N}^{+}$.
Let us see in the
following lemma that (II.1) allows us to define an automorphism that
takes $\textnormal{graph}(s)$ into $\textnormal{graph}(1/z^{k})$.

\begin{lemma}\label{LBL}
Under the conditions stated above, there exists a fiber-preserving automorphism $\varphi$ of $\mathbb{C}^{2}$ such that $$\varphi(\textnormal{graph}(s))=\textnormal{graph}(1/z^{k}).$$
\end{lemma}
\begin{proof}
Let us define
$\varphi (z,w)=(z,e^{-g_{1}(z)}[w-h(z)])=(x,y),$
where $h=s-1/g$ for $g=z^{k}e^{-g_{1}}\in\mathbb{C}(z)$ by (II.1).
Note that $\varphi$ is an automorphism and that
$\varphi^{-1}(x,y)=(x,e^{g_{1}(x)}y+ h(x))=(z,w).$
By definition, $\varphi(z,s(z))=(z,1/z^{k})$.
\end{proof}

Let us see in the following proposition an application of
Lemma\,\ref{LBL}.

\begin{proposition} Let $Z$ be a holomorphic vector field on $\mathbb{C}^{2}$. Let us suppose that $Z$ is tangent to
the curve $\{wz^{k}-s_{0}(z)=0\}$, with $s_{0}\in\mathbb{C}(z)$,
$s_{0}(0)\neq{0}$ and $k\in\mathbb{N}^{+}$. Then, $Z$ is
analytically equivalent, by a fiber-preserving analytic
automorphism $\varphi$ of $\mathbb{C}^{2}$, to a vector field $Y$
tangent to $\{wz^{k}-1=0\}$.
\end{proposition}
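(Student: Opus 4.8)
The plan is to reduce the statement to Lemma~\ref{LBL} by recognizing the given curve as a graph. First I would note that the affine curve $\{wz^{k}-s_{0}(z)=0\}$ is exactly $\textnormal{graph}(s)$ for the meromorphic function $s=s_{0}/z^{k}$: for $z\neq 0$ the equation $wz^{k}=s_{0}(z)$ gives $w=s_{0}(z)/z^{k}=s(z)$, while on $\{z=0\}$ it reads $-s_{0}(0)\neq 0$, so the curve has no point over $z=0$ and coincides with the affine part of $\textnormal{graph}(s)$. Since $s_{0}(0)\neq 0$, the function $s$ has precisely one pole, of order $k$, at the origin, which is exactly the hypothesis under which Lemma~\ref{LBL} was established.

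Next I would apply Lemma~\ref{LBL} directly: it furnishes a fiber-preserving automorphism $\varphi(z,w)=(z,e^{-g_{1}(z)}[w-h(z)])$ of $\mathbb{C}^{2}$ with $\varphi(\textnormal{graph}(s))=\textnormal{graph}(1/z^{k})=\{wz^{k}-1=0\}$. This $\varphi$ is the fiber-preserving automorphism claimed in the statement, and it carries the curve $\{wz^{k}-s_{0}(z)=0\}$ onto $\{wz^{k}-1=0\}$.

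It then remains to check that tangency is preserved under the pushforward, which is the general principle that a biholomorphism sends a vector field tangent to a curve to a vector field tangent to the image curve. Writing $F=wz^{k}-s_{0}(z)$, tangency of $Z$ to $C=\{F=0\}$ means $Z(F)=g\cdot F$ for some holomorphic $g$. Setting $Y=\varphi_{\ast}Z$ and $\widetilde{F}=F\circ\varphi^{-1}$, which cuts out $\varphi(C)$, the identity $Y(\widetilde{F})=(Z(F))\circ\varphi^{-1}=(g\circ\varphi^{-1})\cdot\widetilde{F}$ shows that $Y$ is tangent to $\varphi(C)=\{wz^{k}-1=0\}$; moreover $Y$ is a genuine holomorphic vector field on $\mathbb{C}^{2}$ because $\varphi$ is a global automorphism.

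I do not expect a genuine obstacle here: the content is change-of-coordinates bookkeeping resting on Lemma~\ref{LBL}. The only points needing care are the identification of the hypothesis curve with $\textnormal{graph}(s)$, so that Lemma~\ref{LBL} applies verbatim, and the invariance of the relation $Z(F)\in(F)$ under $\varphi_{\ast}$; both follow from the short computations indicated above.
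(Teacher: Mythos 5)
Your argument is correct and is essentially the paper's own proof: the paper simply sets $Y=\varphi_{\ast}Z$ with $\varphi$ the automorphism of Lemma~\ref{LBL}, which is exactly what you do, after spelling out the (correct) identification of $\{wz^{k}-s_{0}(z)=0\}$ with $\textnormal{graph}(s_{0}/z^{k})$ and the routine fact that pushforward by an automorphism preserves tangency.
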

\begin{proof}
It is enough to define $Y=\varphi_{\ast} Z$, with $\varphi$ as in
Lemma\,\ref{LBL}.
\end{proof}

Let us see in the following proposition that the vector fields of Theorem\,\ref{TtipoCast} can be defined using
$\varphi$ of  Lemma~\ref{LBL}.

\begin{proposition}\label{FP} Let
$\varphi$ be the automorphism of Lemma~\ref{LBL}. Then, the vector
field $X=\varphi^{\ast} Y_{0}$, where $Y_{0}$ is as $(iii)$ or
$(iv)$ of Theorem\,\ref{TtipoCast}, is complete on
$\mathbb{C}^{2}$, of type $\mathbb{C}^{\ast}$, and tangent to
$\textnormal{graph}(s)$.
\end{proposition}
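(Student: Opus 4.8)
The plan is to prove the three asserted properties (completeness, type $\mathbb{C}^{\ast}$, tangency) first for the model field $Y_{0}$ along the curve $C_{0}=\{wz^{k}=1\}=\textnormal{graph}(1/z^{k})$, and only then to carry them back to $\textnormal{graph}(s)$ through the automorphism $\varphi$ of Lemma~\ref{LBL}. The last step is the soft one: $\varphi$ is a fiber-preserving automorphism of $\mathbb{C}^{2}$ with $\varphi(\textnormal{graph}(s))=C_{0}$, and conjugation by an automorphism of $\mathbb{C}^{2}$ preserves completeness, the analytic type of each trajectory, and tangency. So the real content is the analysis of $Y_{0}$.

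First I would check that $Y_{0}$ is a genuine polynomial field on $\mathbb{C}^{2}$: although (iii) and (iv) exhibit denominators $z^{k}$, the hypotheses on the coefficients are exactly what cancels them. In (iii), $A=-ak+A_{0}$ with $A_{0}\in z^{k}\mathbb{C}[z]$ gives $(ak+A(z))/z^{k}=A_{0}(z)/z^{k}\in\mathbb{C}[z]$; in (iv), the condition $m>nk$ together with $A_{0}\in y^{k}\mathbb{C}[y]$ makes $A_{0}(y)/z^{k}$ a polynomial (since $y^{k}/z^{k}$ is), so again the pole disappears. Tangency to $C_{0}$ I would obtain by the direct computation $Y_{0}(F)=c(z,w)\,F$ with $F=wz^{k}-1$; one finds $Y_{0}(F)=(ak+A(z))F$ in case (iii) and $Y_{0}(F)=[a-(m-nk)A(y)]F$ in case (iv), so $C_{0}$ is invariant by $\mathcal{F}(Y_{0})$.

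The heart of the argument is completeness and type. For completeness I would invoke the classification of complete polynomial vector fields on $\mathbb{C}^{2}$ used already in the proof of Proposition~\ref{TtipoC} (in case (iii) one may instead integrate by hand: $\dot z=az$ gives $z(\tau)=z_{0}e^{a\tau}$, and the remaining equation $\dot w=A(z)w-A_{0}(z)/z^{k}$ is linear with entire-in-$\tau$ coefficients, so its solution is entire and defined for all $\tau\in\mathbb{C}$). For the type, I would exhibit the first integral read off the semi-invariant $F$: solving $Y_{0}(F\,\Psi)=0$ with $\Psi$ a function of $z$ (case (iii)) or of $y=z^{m-nk}F^{n}$ (case (iv)) forces $\Psi=e^{R}$ with $R$ a polynomial, so $H=F\,e^{R}$ is a holomorphic, hence meromorphic, first integral of $Y_{0}$; equivalently, after the conjugation recorded in the Remark, $Y_{0}$ becomes $\lambda(t)\,(z\,\partial_{z}-kw\,\partial_{w})$ with rational first integral $t=wz^{k}$. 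The generic fibre of $H$ is biholomorphic to $\mathbb{C}^{\ast}$ and $Y_{0}$ restricts to it as a nonvanishing complete field generating the multiplicative flow, so that trajectory is a $\mathbb{C}^{\ast}$; the dichotomy of $\S1$ then promotes this to ``$Y_{0}$ is of type $\mathbb{C}^{\ast}$''. This is the step I expect to be the main obstacle, the delicate point being to confirm that the generic leaf really is a punctured line (clear in case (iii), where the fibre is a graph over $z\in\mathbb{C}^{\ast}$, but less transparent in case (iv), where $H$ genuinely depends on both variables) and that the exceptional pluripolar set of $\S1$ does not contain it.

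Finally I would transport. Writing $\Theta_{\tau}$ for the entire flow of $Y_{0}$, the flow of $X=\varphi^{\ast}Y_{0}$ is $\varphi^{-1}\circ\Theta_{\tau}\circ\varphi$, defined for every $\tau\in\mathbb{C}$, so $X$ is complete on $\mathbb{C}^{2}$; each trajectory of $X$ is the biholomorphic image under $\varphi^{-1}$ of a trajectory of $Y_{0}$, hence of the same type, so $X$ is of type $\mathbb{C}^{\ast}$; and since $\varphi^{-1}(C_{0})=\textnormal{graph}(s)$ by Lemma~\ref{LBL}, the tangency of $Y_{0}$ to $C_{0}$ transfers to tangency of $X$ to $\textnormal{graph}(s)$, which completes the proof.
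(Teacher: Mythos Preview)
Your overall strategy---establish the three properties for $Y_{0}$ along the model curve $C_{0}=\{wz^{k}=1\}$ and then pull back through $\varphi$---agrees with the paper's, and your tangency computation $Y_{0}(F)=cF$ and the first-integral construction $H=Fe^{R}$ are both correct. The substantive difference lies in how completeness of $Y_{0}$ (and, secondarily, its type) is obtained.

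The paper does not analyse $Y_{0}$ from scratch. It introduces the birational map $\alpha(x,t)=(x,\,t+1/x^{k})$, a biholomorphism of $\mathbb{C}^{\ast}\times\mathbb{C}$, and checks that $Y_{0}=\alpha_{\ast}X_{1}$, where $X_{1}$ is one of the Brunella normal forms (i) (with $b=0$) or (ii) already declared complete in Proposition~\ref{TtipoC}. Completeness of $X_{1}$ then transports to completeness of $Y_{0}$ on $\mathbb{C}^{\ast}\times\mathbb{C}$; the coefficient hypotheses on $A$ in (iii) and (iv) are precisely what makes $\alpha_{\ast}X_{1}$ extend holomorphically across the invariant line $\{z=0\}$, so $Y_{0}$ is complete on all of $\mathbb{C}^{2}$. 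Tangency to $C_{0}$ comes for free from $\alpha(\{t=0\})=C_{0}$, and type $\mathbb{C}^{\ast}$ is read off the eigenvalue ratio $-1/k$ at the unique zero.

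Your shortcut ``invoke the classification'' for completeness in case (iv) is the genuine gap. Brunella's theorem asserts that a polynomial field is complete if and only if it is \emph{polynomially} conjugate to one of the listed normal forms; to use it you must exhibit such a conjugacy for $Y_{0}$, which you do not do---and which is not what the paper does either, since its $\alpha$ is only birational. Without the $\alpha$-device, or an explicit integration (feasible once one notices $Y_{0}(y)=na\,y$ for $y=z^{m-nk}F^{n}$ and solves successively for $y$, $z$, $F$, $w$), your completeness argument in case (iv) does not close. Your first-integral route to type $\mathbb{C}^{\ast}$ is sound in principle, but, as you yourself flag, identifying the generic fibre of $H$ with $\mathbb{C}^{\ast}$ in case (iv) is left undone; the paper bypasses this entirely by arguing from the linear part of $Y_{0}$ at its singularity.
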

\begin{proof}
Let $X_{1}$ be as (i), with $b=0$, or as (ii) of Proposition~\ref{TtipoC}. Then $X_{1}$
leaves invariant $\{t=0\}$ and $\{x=0\}$. If  $\alpha$ is the map
given by $\alpha(x,t)=(x,t+1/x^{k})=(z,w),$ one has that $\alpha$
is a biholomorphism of $\mathbb{C}^{\ast}\times \mathbb{C}$ such
that  $\alpha(\{t=0\})= \textnormal{graph}(1/z^{k})$. Then
$\alpha_{\ast} X_{1}$ is a complete holomorphic vector field on
$\mathbb{C}^{\ast}\times\mathbb{C}$ tangent to
$\textnormal{graph}(1/z^{k})$. On the other hand,
\begin{equation*}
\alpha_{\ast} X_{1}= \left(
\begin{array}{cc}
1 & 0 \\
      \\
\dfrac {-k}{z^{k+1}}  & 1
\end{array}
\right)\cdot X_{1}(z,w-1/z^{k}).
\end{equation*}
An explicit computation shows that $\alpha_{\ast} X_{1}$ is
holomorphic on $\mathbb{C}^{2}$, and then complete on
$\mathbb{C}^{2}$, if $\alpha_{\ast} X_{1}=Y_{0}$, with $Y_{0}$ as in the statement. On the other
hand, since the quotient of the eigenvalues of the linear part of
$DY_{0}(p)$ at the unique zero $p$ of $Y_{0}$ is $-1/k$, $Y_{0}$
is of type $\mathbb{C}^{\ast}$, and then $X$ is as well. Finally,
from Lemma~\ref{LBL}, it follows that $X$ must be complete and
tangent to $\textnormal{graph}(s)$ since
$\varphi^{-1}(\textnormal{graph}(1/z^{k}))=\textnormal{graph}(s)$.
\end{proof}
Then, we have finished the proof of Theorem\,\ref{TtipoCast}.

\subsection{Summary}

The main results of $\S4.1$ and $\S4.2$ may be summarized in the following theorem:

\begin{theorem}\label{T3}
Let $C$ be an analytic curve defined by the graph of a meromorphic fuction $s:\mathbb{C}\to \mathbb{P}^{1}$ with at most one pole.
Then,\\
\noindent $\bullet$ If $s$ has no poles,\\
\noindent $-$ There exist infinitely many complete holomorphic
vector fields tangent to $C$, without zeros on $C$, and such that all their
trajectories are proper and of type $\mathbb{C}$.
Moreover, if $X$ is one of them, then $C$ is a trajectory $C_{z}$ of $X$ and $X$ is analytically equivalent, by a fiber-preserving analytic
automorphism of $\mathbb{C}^{2}$ that takes $C_{z}$ into a line, to a polynomial vector field.
\\
\noindent $-$
For any $p\in C$, there exist infinitely many complete holomorphic vector fields tangent to $C$, with only one zero on $C$, which is $p$, and not
equivalent by an analytic automorphism. If $X$ is one of them,
$C\setminus\{p\}$ is a trajectory $C_{z}$ of $X$ of type
$\mathbb{C}^{\ast}$. Moreover, $X$ can be defined of type
$\mathbb{C}$ and with almost all its trajectories non-proper, or
of type $\mathbb{C}^{\ast}$. In particular,
there are infinitely many of these vector fields not equivalent by an analytic
automorphism.\\
\noindent $\bullet$ If $s$ has one pole,\\
There exist infinitely many complete holomorphic vector fields
tangent to $C$, without zeros on $C$, and such that all their
trajectories are proper and of type $\mathbb{C}^{\ast}$.
Moreover, if $X$ is one of them, then $C$ is a trajectory $C_{z}$ of $X$ and $X$ is
analytically equivalent,
by a fiber-preserving analytic
automorphism of $\mathbb{C}^{2}$ that takes $C$ into  $\{wz^{k}-1=0\}$, to a polynomial vector field.
\em
\end{theorem}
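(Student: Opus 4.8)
The plan is to read Theorem~\ref{T3} off as a bookkeeping assembly of the constructions already carried out in §4.1 and §4.2, using Lemma~\ref{Superficies R} throughout to identify, in each case, the type and properness of the trajectory sitting in $C=\textnormal{graph}(s)$. Every vector field produced below is complete, tangent to $C$, and has at most isolated zeros on $C$, so Lemma~\ref{Superficies R} always applies and supplies the clause ``$C$ (resp. $C\setminus\{p\}$) is a trajectory $C_z$ of the stated type.'' No genuinely new content is needed; the work is to match each assertion to its source and to check that the relevant parameter families are infinite.

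For the case $s\in\mathbb{C}(z)$ (no poles), the first item will come from Proposition~\ref{TtipoC1} together with Proposition~\ref{TtipoC}$\mathbf{(i)}$. Each admissible pair $(A,b)$ with $A\in\mathbb{C}[x]\setminus\{0\}$ and $b\neq 0$ yields a complete field $X=\phi_\ast X_1$ tangent to $C$ with all trajectories proper of type $\mathbb{C}$, and, since $X_1$ is a polynomial field of shape $\mathbf{(i)}$ with $a=0$ (hence zero-free), the fiber-preserving automorphism $\phi^{-1}$ carries $C$ onto the line $\{t=0\}$ and $X$ onto $X_1$; distinct $(A,b)$ give distinct $X$, whence infinitely many, and Lemma~\ref{Superficies R} identifies $C=C_z$ of type $\mathbb{C}$ (no zeros on $C$). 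The second item is exactly Proposition~\ref{TtipoC2} applied at the prescribed $p\in C$: it furnishes infinitely many complete fields tangent to $C$ with $p$ as their unique zero on $C$, infinitely many of type $\mathbb{C}$ with almost all trajectories non-proper and infinitely many of type $\mathbb{C}^{\ast}$, pairwise inequivalent through the eigenvalue-ratio invariant $\lambda$; here Lemma~\ref{Superficies R} gives $C\setminus\{p\}=C_z$ of type $\mathbb{C}^{\ast}$.

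For the case where $s$ has one pole, the item will follow from Theorem~\ref{TtipoCast}, Proposition~\ref{FP}, and the general facts of §1. For each admissible $A$ (i.e. $A_0\in z^k\mathbb{C}[z]$ in $\mathbf{(iii)}$, resp. $A_0\in y^k\mathbb{C}[y]$ in $\mathbf{(iv)}$) the field $X=\varphi^\ast Y_0$ is complete, of type $\mathbb{C}^{\ast}$, and tangent to $C$; being of type $\mathbb{C}^{\ast}$ it has a meromorphic first integral, so all its trajectories are proper (§1), and the linear normal form $z\,\partial/\partial z-kw\,\partial/\partial w$ recorded in the remark after Theorem~\ref{TtipoCast} shows every trajectory off the singularity is in fact of type $\mathbb{C}^{\ast}$. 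The automorphism $\varphi$ of Lemma~\ref{LBL} is fiber-preserving and carries $C$ onto $\textnormal{graph}(1/z^k)=\{wz^k-1=0\}$ with $X$ onto the polynomial model $Y_0$; distinct $A_0$ give distinct $X$, so infinitely many. To obtain ``without zeros on $C$'' I will note that the unique zero of $Y_0$ exhibited in Proposition~\ref{FP} lies over $\{z=0\}$: since $\alpha_\ast X_1$ has no zero on $\mathbb{C}^{\ast}\times\mathbb{C}$, the single singularity only appears upon extending across $\{z=0\}$, and $\{z=0\}$ meets neither $\textnormal{graph}(1/z^k)$ nor, via $\varphi^{-1}$, $C$; then Lemma~\ref{Superficies R} identifies $C=C_z$ of type $\mathbb{C}^{\ast}$.

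The only steps that are not pure citation are the two verifications I expect to be the main obstacle. First, confirming in the one-pole case that the unique singularity of $Y_0$ is genuinely forced onto $\{z=0\}$ --- equivalently that $A_0(0)=0$ gives $A(0)=-ak$, pinning the zero to $z=0$ --- so that $X$ is zero-free along $C$. Second, the elementary but necessary bookkeeping that each parameter family is genuinely infinite and that, in the second item of the no-pole case, its members are pairwise analytically inequivalent via the invariance of $\lambda$. All remaining clauses are immediate from the cited propositions.
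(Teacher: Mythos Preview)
Your proposal is correct and matches the paper's approach exactly: Theorem~\ref{T3} is presented in the paper as a summary statement with no separate proof, simply collecting Propositions~\ref{TtipoC}, \ref{TtipoC1}, \ref{TtipoC2} and Theorem~\ref{TtipoCast}/Proposition~\ref{FP} together with Lemma~\ref{Superficies R}, which is precisely the assembly you carry out. Your two extra verifications (that the unique zero of $Y_0$ lies on $\{z=0\}$ and hence off $\textnormal{graph}(1/z^k)$, and that in the one-pole case all trajectories, not just the generic one, are of type $\mathbb{C}^{\ast}$) are details the paper leaves implicit but that are indeed needed for the statement as written, and your arguments for them are sound.
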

\begin{remark}\em Under the assumption of this section, that is,
$X$ is not identically zero on $\textnormal{graph}(s)$,
the fact that $\textnormal{graph}(s)$ must be of type $\mathbb{C}$ or $\mathbb{C}^{\ast}$ by Lemma\,\ref{Superficies R} is the obstruction
to define complete vector fields tangent to $\textnormal{graph}(s)$
when $s$ has more than one pole.

The construction of an automorphism using (II.1), as $\varphi$ in Lemma\,\ref{LBL},
that takes $\textnormal{graph}(s)$ into $\textnormal{graph}(1/q_{1})$ also works when $s$ has more than one pole.
However, in this case our procedure only produces vector fields of the form $Z^{u}$.
The reason is that  $\alpha(x,t)=(x,s(x)+t)$ is a biholomorphism of $\mathbb{C}^{2}$ minus
at least two vertical lines (see $\S4.2$), and then the only complete polynomial vector field that we can
push forward by $\alpha$ is the vertical one (Picard Theorem).

On the other hand, the existence of a complete holomorphic vector field $X$ identically zero on $\textnormal{graph}(s)$, and such that
$\textnormal{graph}(s)$ is invariant by $\mathcal{F}(X)$,
when $s$ has more than one pole, would imply that there are other complete vector fields until now unknown, as we have mentioned at the beginning of this section.

\em
\end{remark}

\section{Dominability and complete vector fields}
Let $A$ be a subset of  $\mathbb{C}^{2}$ invariant by the flow of
a holomorphic vector field $X$. In this section we will study, in
some cases, the dominability of $\mathbb{C}^{2}\setminus A$ when
$X_{\mid A}$ is complete.

\subsection{$\mathbb{C}^{2}\setminus C$ for $C$ transversal to $\mathcal{F}(X)$ and $X$ complete on $\mathbb{C}^{2}\setminus C$.}

\begin{proposition}
\label{P4} Let $C$ be an analytic curve in $\mathbb{C}^{2}$. If
$C$ is not invariant by $\mathcal{F}(X)$ and $X$ is complete on
$\mathbb{C}^{2}\setminus C$ then:
\begin{enumerate}
\item[{$(i)$}] $X$ vanishes  on $C$, and then  $X$ is complete and
of type $\mathbb{C}^{\ast}$, \item[{$(ii)$}] Up to an analytic
automorphism of $\mathbb{C}^{2}$, $C$ is the graph of a
meromorphic function $s:\mathbb{C}\to\mathbb{P}^{1}$.
\end{enumerate}

\end{proposition}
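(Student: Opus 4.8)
The plan is to treat the two assertions in turn, extracting everything from the hypothesis that the flow of $X$ on $\mathbb{C}^{2}\setminus C$ is complete, together with the non-invariance of $C$. First I would establish the dichotomy underlying $(i)$ by contradiction: suppose $X$ does not vanish identically on $C$. Then $\{X=0\}\cap C$ is a proper analytic subset, so on a dense open subset of the smooth locus of $C$ one has $X\neq 0$. At such a point $p$ I claim $X$ must be tangent to $C$. Indeed, if $X(p)$ were transversal to $C$, then in local coordinates $(x,y)$ with $X=\partial/\partial x$ and $C=\{x=0\}$ the time-$t$ flow $(x,y)\mapsto(x+t,y)$ would carry points of $\mathbb{C}^{2}\setminus C$ (those with $x=-t\neq 0$) onto $C$ in finite time, so it would fail to preserve $\mathbb{C}^{2}\setminus C$, contradicting completeness there. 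Hence $X$ is tangent to $C$ on a dense subset, and by analyticity everywhere, i.e. $C$ would be invariant by $\mathcal{F}(X)$, against the hypothesis. Therefore $X$ vanishes identically on $C$.

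Once $X_{\mid C}\equiv 0$, the points of $C$ are fixed points of the flow, the trajectories through $\mathbb{C}^{2}\setminus C$ remain in $\mathbb{C}^{2}\setminus C$ and are by hypothesis defined for all complex time, and the constant trajectories on $C$ are trivially complete; thus $\Omega_{z}=\mathbb{C}$ for every $z$ and $X$ is complete on all of $\mathbb{C}^{2}$. To see that it is of type $\mathbb{C}^{\ast}$ I would work at a generic $p\in C$, where $X=0$ but $\mathcal{F}(X)$ is regular and transversal to $C$; choose coordinates $(x,y)$ with $C=\{y=0\}$, leaves $\{x=\mathrm{const}\}$, and $X=g(x,y)\,\partial/\partial y$ with $g(x,0)\equiv 0$, so $g=y\,\tilde g$. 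Completeness forces $\tilde g(x,0)\neq 0$, since a higher order zero would make the fibre flow escape to infinity in finite time. On the leaf $\{x=x_{0}\}$ the equation $dy/dt=y\,\tilde g(x_{0},y)$ has, around $y=0$, the nonzero period $\oint dy/(y\,\tilde g(x_{0},y))=2\pi i/\tilde g(x_{0},0)$, so the orbit through a nearby point is periodic in complex time and hence of type $\mathbb{C}^{\ast}$. As such orbits fill an open set and the generic type is well defined (see $\S1$), $X$ is of type $\mathbb{C}^{\ast}$, completing $(i)$.

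For $(ii)$ the plan is to invoke the classification of complete vector fields of type $\mathbb{C}^{\ast}$ on $\mathbb{C}^{2}$ (Suzuki, \cite[Th\'eor\`eme 4]{Suzuki-anales}), which provides a meromorphic first integral and a normal form for $X$ after an analytic automorphism. I would rule out the normal forms in which $X=\lambda(F)\,Y_{0}$ is a first-integral multiple of the foliation generator $Y_{0}$: their non-isolated zero loci are unions of leaves, hence invariant, which contradicts the non-invariance of $C$. The surviving case is the one in which $\mathcal{F}(X)$ is, up to automorphism, the foliation by vertical lines $\{z=\mathrm{const}\}$, so that $X=f(z,w)\,\partial/\partial w$ with $C\subset\{f=0\}$ transversal to the fibres. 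Restricting to a fibre $\{z=z_{0}\}\cong\mathbb{C}$ and extending the flow over the (fixed) zeros, the field $f(z_{0},w)\,\partial/\partial w$ is complete on $\mathbb{C}$; since a complete holomorphic vector field on $\mathbb{C}$ is affine, $f(z_{0},w)=\alpha(z_{0})w+\gamma(z_{0})$ has at most one zero. Thus $C$ meets each vertical line in at most one point and is the graph of the meromorphic function $s(z)=-\gamma(z)/\alpha(z)$, which is exactly the claim.

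The step I expect to be the main obstacle is $(ii)$: pinning down, through Suzuki's classification, that the vertical normal form is the only one compatible with a non-invariant curve of zeros, and that $C$ is genuinely single-valued over the leaf space. A secondary technical point, needed for the type-$\mathbb{C}^{\ast}$ conclusion in $(i)$, is the justification that completeness forces the vanishing of $X$ along $C$ to be of order one, so that the residue computation yields a genuinely nonzero period.
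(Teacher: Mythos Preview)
Your proposal is correct and follows essentially the same route as the paper: for $(i)$ you argue by contradiction that a transversal nonzero value of $X$ on $C$ violates completeness on $\mathbb{C}^{2}\setminus C$, then use order-one vanishing along $C$ to produce nearby $\mathbb{C}^{\ast}$-trajectories; for $(ii)$ you invoke Suzuki's classification \cite[Th\'eor\`eme~4]{Suzuki-anales} and discard the normal forms whose zero locus is a union of leaves, exactly as the paper does by eliminating cases (1), (3), (4). The only difference is that the paper reads off $C=\textnormal{graph}(s)$ directly from Suzuki's case~(2), whereas you rederive the affine-in-$w$ shape of $X$ via the ``complete vector fields on $\mathbb{C}$ are affine'' argument---this extra step is correct but redundant once the explicit Suzuki list is on the table.
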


\begin{proof}
To obtain $(i)$, assume that $X_{\mid C}\not\equiv {0}$ and
derive a contradiction. Take $p\in C$ such that $X(p)\neq{0}$. Suppose that the trajectory $C_{z}$ of $X$ through $p$ is transversal to $C$ at $p$. Locally, in a neighborhood $V$ of $p$, taking
coordinates $(z,w)$ with $p=(0,0)$, $X_{\mid V}=
\partial/\partial z$. On the other side, $X_{\mid
C_{z}\setminus C}$ complete implies that $X_{\mid (C_{z}\setminus C)\cap
V}$ has a zero at $0$, which is not possible. We conclude that $X_{\mid C}\equiv 0$
and $X_{\mid V}= h \partial/\partial z$, for $h$
holomorphic on $V$, vanishing on $C \cap V $, which we assume $\{ z=0 \}$, at
order $1$ by completeness.
Therefore $X$ has infinitely many trajectories of type $\mathbb{C}^{\ast}$ whose topological boundary
in $\mathbb{C}^{2}$ is one point in $C$. In particular,
$X$ is complete and of type $\mathbb{C}^{\ast}$, and defining a proper flow
 \cite{Suzuki-anales}.

To prove $(ii)$, the structure of $X$ is well known \cite[Th\'eor\`eme 4]{Suzuki-anales}.
Up to an analytic change of coordinates, $X$ is one of the following vector fields:
\begin{enumerate}

\item
$$a(z)\frac{\partial}{\partial w},$$
with $a\in\mathbb{C}(z)$.
\item
$$[g(z)(w-s(z))] \frac{\partial}{\partial w},$$
with $g\in\mathbb{C}(z)$, and $s$  meromorphic such that $gs\in\mathbb{C}(z)$.

\item $$\lambda (t) \cdot \left(n z  \frac{\partial}{\partial z}  + m w
\frac{\partial}{\partial w}\right),$$
with $\lambda(t)\equiv \lambda \in\mathbb{C}^{\ast}$, $m, n \in \mathbb{N}^{\ast}$, $(m,n)=1$ or $\lambda\in\mathbb{C}(t)$, $t=z^{-m}w^{n}$, $-m, n \in \mathbb{N}^{\ast}$, $(m,n)=1$.
\item
$$
\frac{\gamma (t) }{z^{\ell}}\cdot\left( nz^{\ell+1} \frac{\partial}{\partial z} -
[(m+n\ell)z^{\ell} w+mp(z)+nz \dot{p} (z)] \frac{\partial}{\partial w}\right),
$$
with $m,n \in\mathbb{N}^{\ast}$, $(m,n)=1$, $\ell\in \mathbb{N}$, $p\in\mathbb{C}[z]$ of degree
$<\ell$ with $p(0)\neq{0}$ if $\ell>0$ or $p\equiv 0$ if $\ell=0$, $\gamma\in\mathbb{C}(t)$ vanishing at $t=0$ at order  $\geq \ell/m$, $t=z^m(z^{\ell}w+p(w))^n.$
\end{enumerate}
Cases $(1)$, $(3)$ and $(4)$ are not possible because their set of
zeros are invariant by $\mathcal{F}(X)$. Therefore, $X$ is as
$(2)$ and $C$ is equal to $\textnormal{graph}(s)$.
\end{proof}

\begin{remark}\label{R1}\em
Note that $(ii)$ implies that $C$ is biholomorphic to $\mathbb{C}$
minus the set of poles of a meromorphic function in $\mathbb{C}$.  \em
\end{remark}

\begin{theorem}
\label{T4} Let $C$ be an analytic curve in $\mathbb{C}^{2}$. If
$C$ is not invariant by $\mathcal{F}$ and $X$ is complete on
$\mathbb{C}^{2}\setminus C$ then $\mathbb{C}^{2}\setminus C$ is
holomorphically dominable.
\end{theorem}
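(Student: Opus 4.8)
The plan is to reduce the statement to the complement of a graph, which was already handled in Section~2, and then transport dominability across a biholomorphism. The point is that essentially all of the work has been relegated to Proposition~\ref{P4}, so Theorem~\ref{T4} should follow as a short corollary.

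First I would invoke Proposition~\ref{P4}. Its hypotheses are precisely those of the present theorem: $C$ is not invariant by $\mathcal{F}(X)$, and $X$ is complete on $\mathbb{C}^{2}\setminus C$. Hence part $(ii)$ supplies an analytic automorphism $\Theta$ of $\mathbb{C}^{2}$ together with a meromorphic function $s:\mathbb{C}\to\mathbb{P}^{1}$ such that $\Theta(C)=\textnormal{graph}(s)$. Since $\Theta$ is a biholomorphism of $\mathbb{C}^{2}$ carrying $C$ onto $\textnormal{graph}(s)$, its restriction is a biholomorphism
$$\Theta:\mathbb{C}^{2}\setminus C\longrightarrow \mathbb{C}^{2}\setminus\textnormal{graph}(s).$$

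Next I would appeal to the dominability of the complement of a graph. By Theorem~\ref{T1} (equivalently by \cite[Theorem\,5.2]{BL}), for any $u\in\mathbb{C}(z)$ the map $f^{u}:\mathbb{C}^{2}\to \mathbb{C}^{2}\setminus\textnormal{graph}(s)$ is a surjective holomorphic map whose Jacobian determinant is not identically zero; in particular $\mathbb{C}^{2}\setminus\textnormal{graph}(s)$ is holomorphically dominable. Composing, the holomorphic map $\Theta^{-1}\circ f^{u}:\mathbb{C}^{2}\to \mathbb{C}^{2}\setminus C$ has Jacobian determinant equal to the product of the everywhere nonvanishing Jacobian of $\Theta^{-1}$ with that of $f^{u}$, hence not identically zero, so it dominates $\mathbb{C}^{2}\setminus C$.

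The structure of the argument makes clear that there is no genuine obstacle remaining at this stage: the real content — that a non-invariant curve, in whose complement a complete field lives, must vanish on $C$, be of type $\mathbb{C}^{\ast}$, and be, up to automorphism, the graph of a meromorphic function — is entirely absorbed into Proposition~\ref{P4} (and through it into Suzuki's classification \cite[Th\'eor\`eme 4]{Suzuki-anales}). Given that, the only facts I need are that dominability is a biholomorphic invariant and that complements of graphs are dominable, both already available. If anything, the subtle point to double-check is purely bookkeeping: that the automorphism $\Theta$ of $\mathbb{C}^{2}$ produced by Proposition~\ref{P4} genuinely identifies $\mathbb{C}^{2}\setminus C$ with $\mathbb{C}^{2}\setminus\textnormal{graph}(s)$ as open subsets, which is immediate since $\Theta(C)=\textnormal{graph}(s)$ and $\Theta$ is globally biholomorphic.
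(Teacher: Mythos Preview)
Your proposal is correct and is precisely the paper's own argument: the paper's proof of Theorem~\ref{T4} is the single line ``It follows from Theorem~\ref{T1} and Proposition~\ref{P4},'' and you have simply spelled out that composition explicitly.
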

\begin{proof}
It follows from Theorem~\ref{T1} and Proposition~\ref{P4}.
\end{proof}

\subsection{$\mathbb{C}^{2}\setminus C_z$ for $\overline{C}_{z}$ singular and $X$ polynomial and complete on $C_{z}$}
Let us  study the dominability of the complementary in $\mathbb{C}^{2}$ of a trajectory $C_{z}$ of a vector field $X$. Here,
we only treat the case of a polynomial vector field $X$ with isolated zeros that is complete on a proper trajectory $C_z$. Recall that $C_{z}$ is proper if
the topological closure $\overline{C}_{z}$ in $\mathbb{C}^{2}$ is an analytic curve.
In this situation, $C_{z}$ is of type $\mathbb{C}$ or $\mathbb{C}^{\ast}$.
The trajectory $C_{z}$ is algebraic if $\overline{C}_{z}$ is an algebraic curve.

\subsubsection{$C_{z}$ of type $\mathbb{C}$}

Note that $\overline{C}_{z}=C_{z}$.
If $C_{z}$ is algebraic, ${C}_{z}=\{y=0\}$, after a polynomial automorphism by Abhyankar-Moh-Suzuki theorem \cite{Suzuki-japonesa}.
If $C_{z}$ is nonalgebraic, $C_{z}$ defines a leaf of an algebraic foliation $\mathcal{F}(X)$
with all its ends (one) planar, isolated and properly embedded in $\mathbb{C}^{2}$. Then
${C}_{z}=\{y=0\}$ after an analytic automorphism \cite{Brunella-topology2}. Then, $\mathbb{C}^{2}\setminus C_{z}$ is biholomophic to $\mathbb{C}\times \mathbb{C}^{\ast}$
(complement of graph of $s=0$), and $\mathbb{C}^{2}\setminus C_{z}$ is dominable.

\subsubsection{$C_{z}$ of type $\mathbb{C}^{\ast}$}
Note that $\overline{C}_{z}=C_{z}$ or $\overline{C}_{z}=C_{z}\cup \{p\}$ with $X(p) = 0$.
Let us study the case where $\overline{C}_{z}$ is a singular curve. Necessarily, $\overline{C}_{z}$ has only one
singularity, say $p$, and $\overline{C}_{z}=C_{z}\cup \{p\}$ with $X(p)=0$.

\begin{theorem}
\label{T5}
Let $C_{z}$ be a a proper trajectory of type $\mathbb{C}^{\ast}$ of a polynomial vector field $X$ on $\mathbb{C}^{2}$.
If $\overline{C}_{z}$ is singular and $X_{\mid C_{z}}$ is complete, then $\overline{C}_{z}=\{y^{r}-ax^{s}=0\},$ with
$a\neq{0}$, $r,s\in\mathbb{N}^{+}$, $r\cdot s\neq{1}$ and $(r,s)=1$, after an analytic automorphism. In particular,
\begin{enumerate}
\item[{(i)}]
There is a non-surjective holomorphic dominating map $\Gamma$ from $\mathbb{C}^{2}$ to $\mathbb{C}^{2}\setminus C_{z}$, such that
\item[{(ii)}]
Its image $\Gamma(\mathbb{C}^{2})$ is biholomorphic to
$$\left({\mathbb{C}}^{2}\setminus \{xy(y^{r}-ax^{s})=0\}\right)\cup\{(0,0)\}.$$
\end{enumerate}
\end{theorem}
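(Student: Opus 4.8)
The plan is to first reduce $\overline{C}_z$ to the stated algebraic model and then exhibit $\Gamma$ explicitly. I place the unique singular point $p$ of $\overline{C}_z$ at the origin, so that $\overline{C}_z=C_z\cup\{(0,0)\}$ with $X(0,0)=0$ and $C_z\simeq\mathbb{C}^{\ast}$ a separatrix. Since $\overline{C}_z$ is an affine curve, smooth off the origin, with $\overline{C}_z\setminus\{0\}\simeq\mathbb{C}^{\ast}$, its normalization $\nu\colon\widetilde C\to\overline{C}_z$ satisfies $\widetilde C\setminus\nu^{-1}(0)\simeq\mathbb{C}^{\ast}$; as $\mathbb{C}^{\ast}$ has two ends and $\widetilde C$ is noncompact, $\nu^{-1}(0)$ is a single point and $\widetilde C\simeq\mathbb{C}$, so the singularity is unibranch. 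Thus $C_z$ is a proper $\mathbb{C}^{\ast}$-separatrix through an isolated singularity, and the local discussion in the proof of Proposition~\ref{TtipoC2} applies: the quotient $\lambda$ of the eigenvalues of $DX(0,0)$ must be a positive rational $s/r$ with $(r,s)=1$, this being the only case yielding a proper, punctured-disk branch of type $\mathbb{C}^{\ast}$ through $p$. Poincar\'e linearization then conjugates $\mathcal F(X)$ near the origin to $rx\,\partial/\partial x+sy\,\partial/\partial y$, whose separatrix through the origin is the quasi-homogeneous cusp $\{y^{r}=c\,x^{s}\}$, singular precisely when $rs\neq1$. To globalize this to an automorphism of $\mathbb{C}^{2}$ carrying $\overline C_z$ onto $\{y^{r}=a\,x^{s}\}$ I would argue as in the type-$\mathbb{C}$ case of \S5.2.1: $\overline C_z$ is a properly embedded rational curve with a single cusp and one place at infinity, the leaf of an algebraic foliation, so a version of the Abhyankar--Moh--Suzuki theorem \cite{Suzuki-japonesa} together with Brunella's description of properly embedded leaves \cite{Brunella-topology2} and the classification \cite[Th\'eor\`eme 4]{Suzuki-anales} straightens it to the weighted-homogeneous model. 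This straightening, rather than the subsequent elementary steps, is the main obstacle.

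Granting the normal form, $C_z=\{y^{r}=a x^{s}\}\setminus\{(0,0)\}$ and hence $\mathbb{C}^{2}\setminus C_z=(\mathbb{C}^{2}\setminus\{y^{r}=ax^{s}\})\cup\{(0,0)\}$. Since the set $S:=(\mathbb{C}^{2}\setminus\{xy(y^{r}-ax^{s})=0\})\cup\{(0,0)\}$ only removes, in addition, the two axes and re-adds the single point $(0,0)\notin C_z$, one has $S\subset\mathbb{C}^{2}\setminus C_z$. Therefore it suffices to produce a holomorphic $\Gamma\colon\mathbb{C}^{2}\to\mathbb{C}^{2}$ of generic rank two with $\Gamma(\mathbb{C}^{2})=S$; it is then automatically a non-surjective dominating map into $\mathbb{C}^{2}\setminus C_z$, proving (i), and (ii) is the tautology $\Gamma(\mathbb{C}^{2})=S$. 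The conceptual point is that the open part $U:=\mathbb{C}^{2}\setminus\{xy(y^{r}-ax^{s})=0\}$ is \emph{not} itself dominable: the first integral $c=y^{r}/x^{s}$ identifies $U$, through a monomial automorphism of $(\mathbb{C}^{\ast})^{2}$ (available since $(r,s)=1$), with $(\mathbb{C}\setminus\{0,a\})\times\mathbb{C}^{\ast}$, and composing any rank-two map with the projection onto the hyperbolic factor $\mathbb{C}\setminus\{0,a\}$ would violate Picard. The single added point $(0,0)$ is exactly what removes this obstruction: $\Gamma$ must collapse an entire analytic curve to the origin, just as the model $(x,y)\mapsto(x,xy)$ collapses $\{x=0\}$, so that $c\circ\Gamma$ is defined only off a divisor and may be nonconstant.

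To build $\Gamma$ I would imitate the flow-and-section mechanism of Theorem~\ref{T1} adapted to the linear field $X=rx\,\partial/\partial x+sy\,\partial/\partial y$, whose flow $(x,y)\mapsto(e^{r\tau}x,e^{s\tau}y)$ preserves $c=y^{r}/x^{s}$. I use one exponential to sweep out the free direction $d\in\mathbb{C}^{\ast}$ complementary to $c$, and a reparametrized Buzzard--Lu map of the shape $c=a(1-e^{(\cdots)})$, built from $\Psi$, to make the invariant $c$ range over $\mathbb{C}\setminus\{a\}$; imposing $c\neq 0$ as well then forces the locus $\{c=0\}$ together with the two axes to be the divisor that $\Gamma$ sends to $(0,0)$. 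Transporting this back through the monomial identification and the automorphism of the first paragraph produces an entire $\Gamma$ whose image is precisely $S$: off the collapsed divisor it is a local biholomorphism onto $U$, so its Jacobian is not identically zero, while the whole divisor maps to the origin. The remaining points are bookkeeping, namely checking that the exponential/$\Psi$ combination covers $\mathbb{C}\setminus\{a\}$ surjectively, that the degeneracy locus is collapsed to $(0,0)$ with nothing else in the fibre over the origin, and that $\Gamma$ is globally entire after clearing the monomial denominators; all of these are routine once the global straightening of the first paragraph is available, which is why I regard that straightening as the crux of the argument.
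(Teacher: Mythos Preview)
Your proposal has a genuine gap in the straightening step, and the paper's route is quite different from what you sketch.

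\textbf{Normal form of $\overline{C}_z$.} You try to straighten the \emph{curve} directly, citing Abhyankar--Moh--Suzuki, Brunella's proper-leaves paper, and Suzuki's classification. This does not work as stated. Abhyankar--Moh--Suzuki concerns smooth embedded copies of $\mathbb{C}$, not singular rational curves; the relevant algebraic result is the Lin--Za\u\i denberg theorem, and the paper invokes exactly that when $C_z$ is algebraic. When $C_z$ is \emph{not} algebraic, there is no curve-straightening theorem to appeal to: $\overline{C}_z$ is a transcendental analytic curve, and neither Lin--Za\u\i denberg nor Brunella's result applies to it. The paper instead normalizes the \emph{vector field} $X$. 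Using the classification of complete polynomial vector fields with a $\mathbb{C}^\ast$-trajectory accumulating at a zero (from \cite{Bustinduy-indiana}), it reduces $X$ to one of three explicit polynomial models; a local eigenvalue analysis (ruling out $\lambda_1/\lambda_2\notin\mathbb{Q}^{+}$ and $\lambda_1/\lambda_2\in\mathbb{N}^{+}\cup 1/\mathbb{N}^{+}$, since those give only smooth separatrices, and discarding the third model whose ratio is $-1/\ell$) leaves two families, and then Suzuki's theorem \cite[p.\,530]{Suzuki-anales} gives an analytic automorphism taking $X$ to $rx\,\partial/\partial x+sy\,\partial/\partial y$. The curve $\overline{C}_z=\{y^r=ax^s\}$ is then read off as a level set of the first integral $y^r/x^s$. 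Your local discussion via Proposition~\ref{TtipoC2} only covers the Poincar\'e domain and does not by itself exclude Siegel-type or resonant cases; more importantly, the globalization you flag as ``the main obstacle'' really is the whole content, and it goes through $X$, not through the curve.

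\textbf{Construction of $\Gamma$.} Here your idea is correct but needlessly elaborate. The paper does not use the flow of $X$ at all. Since $(r,s)=1$, choose $p,q\in\mathbb{N}^{+}$ with $pr-qs=1$ and set $\gamma(u,v)=(v^{q}u^{r},\,v^{p}u^{s})$. Then $y^{r}-ax^{s}=u^{rs}v^{qs}(v-a)$, so $\gamma$ restricts to a biholomorphism $(\mathbb{C}^{\ast})^{2}\to(\mathbb{C}^{\ast})^{2}$, collapses both axes to $(0,0)$, and sends $\{v=a\}$ onto $C_z$. Composing $\gamma$ with any surjective dominating map $\mathbb{C}^{2}\to\mathbb{C}^{2}\setminus\{v=a\}$ (e.g.\ $(u,w)\mapsto(u,a+e^{w})$) yields $\Gamma$ with image exactly $\bigl(\mathbb{C}^{2}\setminus\{xy(y^{r}-ax^{s})=0\}\bigr)\cup\{(0,0)\}$. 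Your observation that the hyperbolic factor $\mathbb{C}\setminus\{0,a\}$ obstructs dominability of $U$ and that the added origin removes the obstruction is exactly right, but the monomial map already encodes the required collapse without any flow or $\Psi$-construction.
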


\medskip

\subsection*{Proof of Theorem\,\ref{T5}}

If $C_{z}$ is algebraic, $\overline{C}_{z}$ is defined by $\{y^{r}-ax^{s}=0\}$ after a polynomial automorphism, by Lin-Zaidenberg's
theorem \cite{Lin-zaidenberg}.

If $C_{z}$ is nonalgebraic, let us see in the following proposition that
$\overline{C}_{z}$ is also given by above equation after an analytic automorphism.

\begin{proposition}\label{P6}
Let $C_{z}$ be a proper trajectory of type $\mathbb{C}^{\ast}$ of $X$ such that
$\overline{C}_{z}$ is singular and $X_{\mid C_{z}}$ is complete.
If $C_{z}$ is not algebraic, then $X$ is one of the following polynomial vector fields, up to a polynomial automorphism of $\mathbb{C}^{2}$:
\begin{enumerate}
\item
$$
X=\lambda x\, \frac{\partial}{\partial x} + [a(x)y+c(x)]\,\frac{\partial}{\partial y},
$$
with $\lambda/a(0)\in\mathbb{Q}^{+}\setminus \{\mathbb{N}^{+}\cup 1/\mathbb{N}^{+}\}$.
\item
$$X= \, x[n\,f(x^m y^n)+ \alpha] \,\frac{\partial}{\partial x} - y
[m\,f(x^m y^n)+ \beta]\,\frac{\partial}{\partial y},
$$
with $m,n\in \mathbb{N}^{*}$, $f(z)\in z\cdot\mathbb{C}[z]$, $\alpha,\beta\in\mathbb{C}$ such that
$\alpha m - \beta n\in\mathbb{C}^{\ast}$ and
$-\alpha/\beta \in\mathbb{Q}^{+}\setminus\{\mathbb{N}^{+}\cup 1/\mathbb{N}^{+}\}$.
\end{enumerate}
\end{proposition}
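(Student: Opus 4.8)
The plan is to reduce the statement to a local study of the foliation $\mathcal{F}(X)$ at the singular point of $\overline{C}_{z}$, and then to recover the global shape of $X$ from Suzuki's classification of vector fields of type $\mathbb{C}^{\ast}$, using the non-algebraicity of $C_{z}$ to discard the irrelevant models. First I would localise at $p$: as recalled before the statement, $\overline{C}_{z}=C_{z}\cup\{p\}$ with $X(p)=0$ and $p$ the only singular point of $\overline{C}_{z}$, and since $C_{z}\simeq\mathbb{C}^{\ast}$ is proper, near $p$ the leaf is a punctured disc accumulating at $p$. Thus the germ of $\overline{C}_{z}$ at $p$ is a single (cuspidal) branch and $C_{z}$ is a separatrix of $\mathcal{F}(X)$ at $p$. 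Completeness of $X_{\mid C_{z}}$ together with $C_{z}\simeq\mathbb{C}^{\ast}$ forces $X_{\mid C_{z}}=c\,\zeta\,\partial/\partial\zeta$ in the uniformising coordinate $\zeta$, so the holonomy along the separatrix is periodic; arguing as in the proof of Proposition~\ref{TtipoC2}, the quotient $\lambda$ of the eigenvalues of the linear part of $X$ at $p$ must be a positive rational, and by Poincar\'e's theorem \cite[p.\,10]{Brunella-impa} $\mathcal{F}(X)$ is analytically linearisable at $p$ as $x\,\partial/\partial x+\lambda y\,\partial/\partial y$. Writing $\lambda=r/s$ with $(r,s)=1$, the nearby leaves are $\{y^{s}=\textnormal{const}\cdot x^{r}\}$, whose closures are smooth exactly when $r=1$ or $s=1$; as $\overline{C}_{z}$ is singular this rules out $\lambda\in\mathbb{N}^{+}\cup 1/\mathbb{N}^{+}$, identifies $\overline{C}_{z}$ near $p$ with $\{y^{r}-ax^{s}=0\}$, and gives $rs\neq 1$, $(r,s)=1$.

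Next I would globalise. The linear model shows that every leaf close to $C_{z}$ is again a punctured disc, so $X$ is of type $\mathbb{C}^{\ast}$; once one checks that $X$ is in fact a complete vector field, Suzuki's structure theorem \cite[Th\'eor\`eme~4]{Suzuki-anales} (already used in the proof of Proposition~\ref{P4}) applies and puts $\mathcal{F}(X)$, up to analytic automorphism, in one of finitely many standard models. Among these, the models with non-isolated singular set are excluded because $X$ has isolated zeros, and the models tangent to a rational fibration $\{t=\textnormal{const}\}$ are excluded because $C_{z}$ is non-algebraic; what survives are the two models carrying a transcendental first integral, which I would realise by the polynomial representatives $(1)$ and $(2)$ of the statement, one with a single invariant line through $p$ and one with the two invariant axes through $p$. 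For $(2)$, a direct computation gives $X(x^{m}y^{n})=(m\alpha-n\beta)\,x^{m}y^{n}$, exhibiting $x^{m}y^{n}$ as a semi-invariant and forcing $m\alpha-n\beta\in\mathbb{C}^{\ast}$.

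Finally I would pin down the parameters by matching the local invariant of the first step. In $(1)$ the unique zero of $X$ is $(0,-c(0)/a(0))$, where the eigenvalue quotient equals $\lambda/a(0)$; in $(2)$ the zero is the origin, where $f$ vanishes and the quotient of the linear part $\alpha x\,\partial/\partial x-\beta y\,\partial/\partial y$ equals $-\alpha/\beta$. Hence the first step yields precisely $\lambda/a(0),\,-\alpha/\beta\in\mathbb{Q}^{+}\setminus(\mathbb{N}^{+}\cup 1/\mathbb{N}^{+})$, together with the identification $\overline{C}_{z}=\{y^{r}-ax^{s}=0\}$ with $rs\neq 1$ and $(r,s)=1$.

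The main obstacle is the globalisation: one must first verify that the local completeness of $X_{\mid C_{z}}$, together with the type-$\mathbb{C}^{\ast}$ behaviour near $p$, really places $X$ in the hypotheses of Suzuki's theorem, and then that the intrinsic analytic models it provides are represented by honest \emph{polynomial} vector fields. In particular one has to show that non-algebraicity of $C_{z}$ --- equivalently, the transcendental behaviour of the second end of $C_{z}\simeq\mathbb{C}^{\ast}$ at infinity --- is compatible only with the parameter restrictions $f\in z\cdot\mathbb{C}[z]$ and $a,c\in\mathbb{C}[x]$ occurring in $(1)$ and $(2)$, excluding every other case on Suzuki's list. Once this correspondence is established, the eigenvalue computation above closes the argument.
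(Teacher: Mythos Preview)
Your local analysis at $p$ is essentially the same as the paper's, but your globalisation step has a genuine gap that you yourself flag without resolving. The paper does \emph{not} pass through Suzuki's analytic classification at this point; instead it invokes \cite[p.\,663]{Bustinduy-indiana}, which says that a polynomial vector field possessing a proper trajectory $C_z$ of type $\mathbb{C}^{\ast}$ with $\overline{C}_z=C_z\cup\{p\}$ is, up to a \emph{polynomial} automorphism, one of three explicit polynomial models $(a)$, $(b)$, $(c)$. The local eigenvalue analysis is then applied separately to each model: for $(a)$ and $(b)$ it yields exactly the constraints $\lambda/a(0),\,-\alpha/\beta\in\mathbb{Q}^{+}\setminus(\mathbb{N}^{+}\cup 1/\mathbb{N}^{+})$, while for $(c)$ one computes (via the covering $H$) that the eigenvalue quotient at the unique zero is $-1/\ell\notin\mathbb{Q}^{+}$, so only smooth transversal separatrices exist at $p$ and $(c)$ is excluded.

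Your route through Suzuki's theorem runs into two obstacles that are not merely technical. First, Suzuki's theorem requires $X$ to be complete on all of $\mathbb{C}^2$, whereas the hypothesis gives only completeness of $X_{\mid C_z}$; your sentence ``once one checks that $X$ is in fact a complete vector field'' is precisely the missing step, and the observation that nearby leaves are locally punctured discs near $p$ says nothing about their global type or about completeness of the other solutions. Second, even granting completeness, Suzuki's theorem produces normal forms up to \emph{analytic} automorphism, while the proposition asserts a \emph{polynomial} normal form; passing from one to the other is nontrivial and is exactly what the input from \cite{Bustinduy-indiana} (built on Brunella's polynomial classification) supplies. Your exclusion of ``models tangent to a rational fibration'' because $C_z$ is non-algebraic is also not correct as stated: the relevant dichotomy is not algebraic versus non-algebraic leaves but rather which of the models $(a)$--$(c)$ admit a singular separatrix, and this is settled by the eigenvalue computation, not by the transcendental behaviour of $C_z$ at infinity.
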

\begin{proof}
The fact that $\overline{C}_{z}=C_{z}\cup \{p\}$ implies that $X$ is one of the following vector fields,
up to a polynomial automorphism \cite[p.\,663]{Bustinduy-indiana}:\\
\noindent $(a)$
$$\lambda x\, \frac{\partial}{\partial x} + [a(x)y+b(x)]\,
\frac{\partial}{\partial y},$$
where $a,b\in\mathbb{C}[x]$, and $a(0),\lambda\in\mathbb{C}^{\ast}$.\\
\noindent $(b)$
$$\, x[n\,f(x^m y^n)+ \alpha] \,\frac{\partial}{\partial x} - y
[m\,f(x^m y^n)+ \beta]\,\frac{\partial}{\partial y},$$
with $m,n\in \mathbb{N}^{*}$, $f(z)\in z\cdot\mathbb{C}[z]$,
$\alpha,\beta\in\mathbb{C}$ such that
$\beta/\alpha\in\mathbb{Q}^{-}$ and $\alpha m - \beta n
\in\mathbb{C}^{\ast}$.\\
\noindent $(c)$
$$
x[n\, S + \alpha] \frac{\partial}{\partial x} + \left\{- \frac{ [
n T + m(x^{\ell} y + p(x) )]\,S + \alpha T}{x^{\ell}} \right\}
\frac{\partial}{\partial y},
$$
for $m,n,\ell\in\mathbb{N}^{\ast}$, $\alpha\in\mathbb{C}^{\ast}$,
$p(x)\in \mathbb{C}[x]$ of degree $<{\ell}$, $p(0)\neq{0}$,
$T=\ell x^{\ell}y +xp'(x)$, $S= f(x^m {(x^{\ell}y+ p(x))}^n)$ with
$f(z)\in z\cdot\mathbb{C}[z]$, and where $$[n\,xp'(x) + m\,p(x)] S
+ \alpha x p'(x)\in x^{\ell}\cdot \mathbb{C}[x,y].$$
\noindent
Let us analyze such an $X$ when $\overline{C}_{z}$ is a singular curve.

\noindent $\bullet$ {\it Case} $(a)$. Let us  use well-known
results about singularities of vector fields around $p=(0,0)$
\cite[pp.\,11-16]{Brunella-impa}. Let $\lambda_{1}=\lambda$ and
$\lambda_{2}=a(0)$ be the eigenvalues of the linear part $DX(p)$
of $X$ at $p$ .

If $\lambda_{1}/\lambda_{2}\not\in \mathbb{Q}^{+}$, there are only two separatrices of $\mathcal{F}$ through $p$,
which are smooth and transversal at $p$, which is impossible. If $\lambda_{1}/\lambda_{2}=r/s\in \mathbb{Q}^{+}$, there are two possibilities:

If $r/s\not \in \mathbb{N}^{+}\cup 1/\mathbb{N}^{+}$, according to Poincar\'e's linearization theorem, $X$ is $r z \partial /\partial z + s w \partial /\partial w $
in certain coordinates around $p$.
Then $z^{s}/w^{r}$ is a local first integral, and this possibility can occur.

If $r/s\in \mathbb{N}^{+}\cup 1/\mathbb{N}^{+}$, according to Poincar\'e-Dulac's normal form theorem,
$X$ is $z \partial /\partial z + (n w + \epsilon z^{n})\partial /
\partial w $, with $\epsilon\in\{0,1\}$, $n=r/s$ or $s/r$ $\in\mathbb{N}^{+}$ around $p$. If $\epsilon=0$, $z^{n}/w$ is a local
first integral. Then all the separatrices through $p$ are smooth, which is not possible. If $\epsilon=1$, $z \,e^{-w/z^{n}}$ is a local
first integral. Thus there are no separatrices different from $\{x=0\}$, which is also impossible.
Therefore, $\lambda/a(0)\in\mathbb{Q}^{+}\setminus \{\mathbb{N}^{+}\cup 1/\mathbb{N}^{+}\}$.\\
\noindent $\bullet$ {\it Case} $(b)$. The eigenvalues of $DX(p)$
are $\lambda_{1}=\alpha$ and $\lambda_{2}=-\beta$. Then $\lambda_{1}/\lambda_{2} \in \mathbb{Q}^{+}$.
One analyzes as in case $(a)$ that $\lambda_{1}/\lambda_{2}\in\mathbb{Q}^{+}\setminus \{\mathbb{N}^{+}\cup 1/\mathbb{N}^{+}\}$.\\
\noindent $\bullet$ {\it Case} $(c)$. According to
\cite[p.\,649]{Bustinduy-indiana}, if $H$ is the regular covering
map from $u\neq {0}$ to $x\neq{0}$, $(u,v)\mapsto
(x,y)=H(u,v)=(u^n,{u^{-(m+n\ell)}} [v-u^m p(u^n)]),$
$$
H^{\ast} X=[f(v^n)+ \alpha / n]u \frac{\partial}{\partial u} +
[\alpha m/n]v \frac{\partial}{\partial v}.
$$
Then $X$ has only one zero $p$, which is on $\{x=0\}$ (invariant by $X$). Working with the expression of $X$  one obtains that $DX(p)$ has eigenvalues $\lambda_{1}=\alpha$ and
$\lambda_{2}=-\alpha \ell$. Then $\lambda_{1}/\lambda_{2}=-1/\ell\not\in\mathbb{Q}^{+}$, and there are only two separatrices  through $p$,
which are smooth and transversal at $p$, which is not possible. Hence $(c)$ does not occur.
\end{proof}
Let us see in the following proposition the analytic version of Proposition\,\ref{P6}.
\begin{proposition}\label{P7}
Let $C_{z}$ be a proper trajectory of type $\mathbb{C}^{\ast}$ of $X$ such that
$\overline{C}_{z}$ is singular and $X_{\mid C_{z}}$ is complete.
If $C_{z}$ is not algebraic,
$$
X =r x \,\dfrac{\partial}{\partial x} + s y
\,\dfrac{\partial}{\partial y},
$$
with $r,s\in\mathbb{N}^{+}$, $r\cdot s\neq{1}$ and $(r,s)=1$, up to an analytic automorphism of
$\mathbb{C}^{2}$.
\end{proposition}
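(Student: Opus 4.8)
The plan is to feed the two polynomial normal forms produced by Proposition~\ref{P6} into a holomorphic linearization theorem at the singularity $p$ where $\overline{C}_{z}$ is singular. First I would recall that in both cases of Proposition~\ref{P6} the point $p$ (which, after a translation in case (1), we place at the origin) is a zero of $X$ lying on the invariant line $\{x=0\}$, with $\overline{C}_{z}=C_{z}\cup\{p\}$. In case (1) the linear part $DX(p)$ is triangular with eigenvalues $\lambda_{1}=\lambda$ and $\lambda_{2}=a(0)$; in case (2), since $f(0)=0$, the linear part at the origin is $\alpha x\,\partial/\partial x-\beta y\,\partial/\partial y$, with eigenvalues $\lambda_{1}=\alpha$ and $\lambda_{2}=-\beta$. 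In either case the hypotheses of Proposition~\ref{P6} give
$$\frac{\lambda_{1}}{\lambda_{2}}=\frac{r}{s}\in\mathbb{Q}^{+}\setminus\{\mathbb{N}^{+}\cup 1/\mathbb{N}^{+}\},$$
where I take the ratio in lowest terms, so that $(r,s)=1$ and, since neither $r/s$ nor $s/r$ is a positive integer, $r\geq 2$ and $s\geq 2$; in particular $rs\neq 1$.

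Next I would verify the two arithmetic conditions that force analytic linearizability. Because $\lambda_{1}/\lambda_{2}$ is a positive real, the eigenvalues $\lambda_{1},\lambda_{2}$ lie on a common ray through the origin, so $0$ does not belong to the convex hull of $\{\lambda_{1},\lambda_{2}\}$ and $p$ is a singularity in the Poincar\'e domain. For the absence of resonances, a relation $\lambda_{i}=m_{1}\lambda_{1}+m_{2}\lambda_{2}$ with $m_{1},m_{2}\in\mathbb{N}$ and $m_{1}+m_{2}\geq 2$ translates, after dividing by $\lambda_{2}$, into an integer identity of the form $(1-m_{1})r=m_{2}s$ (and its symmetric counterpart); using $(r,s)=1$ together with $r,s\geq 2$ one checks these admit no nonnegative solutions, so $(\lambda_{1},\lambda_{2})$ is non-resonant.

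With both conditions in hand I would invoke Poincar\'e's linearization theorem \cite[pp.\,11--16]{Brunella-impa}: a germ of holomorphic vector field at a singularity in the Poincar\'e domain with non-resonant eigenvalues is holomorphically conjugate to its linear part. Hence, after an analytic automorphism of $\mathbb{C}^{2}$, one has $X=\lambda_{1}x\,\partial/\partial x+\lambda_{2}y\,\partial/\partial y$. Writing $\mu=\lambda_{1}/r=\lambda_{2}/s\in\mathbb{C}^{\ast}$, this equals $\mu\,(rx\,\partial/\partial x+sy\,\partial/\partial y)$; since multiplying $X$ by the constant $\mu$ only rescales the complex time, it changes neither the trajectory $C_{z}$, nor its type $\mathbb{C}^{\ast}$, nor the completeness of $X_{\mid C_{z}}$, so we may absorb $\mu$ and obtain the asserted $X=rx\,\partial/\partial x+sy\,\partial/\partial y$ with $r,s\in\mathbb{N}^{+}$, $(r,s)=1$ and $rs\neq 1$.

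The step I expect to be the main obstacle is precisely this passage from the polynomial normal forms to the clean linear model. The eigenvalues $\lambda_{1},\lambda_{2}$ need not be integers, so the theorem delivers only $\lambda_{1}x\,\partial/\partial x+\lambda_{2}y\,\partial/\partial y$, and turning this into $rx\,\partial/\partial x+sy\,\partial/\partial y$ relies on the time-rescaling normalization above, which has to be justified as harmless for the invariants (properness, type, completeness) that the statement records. A second delicate point is guaranteeing the \emph{holomorphic}, rather than merely formal, linearization, which is exactly why establishing membership in the Poincar\'e domain, and not only non-resonance, is indispensable.
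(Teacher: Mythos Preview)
Your argument has a genuine gap at the passage from Poincar\'e's linearization to an analytic automorphism of $\mathbb{C}^{2}$. Poincar\'e's theorem \cite[pp.\,11--16]{Brunella-impa} only furnishes a \emph{local} biholomorphism on a neighbourhood of $p$ conjugating the germ of $X$ to its linear part; it says nothing about extending this germ to a biholomorphism of all of $\mathbb{C}^{2}$. Your sentence ``Hence, after an analytic automorphism of $\mathbb{C}^{2}$, one has $X=\lambda_{1}x\,\partial/\partial x+\lambda_{2}y\,\partial/\partial y$'' is therefore unjustified: the conclusion of Proposition~\ref{P7} is a global statement, and a local normal form does not deliver it. Indeed, the paper already uses Poincar\'e's theorem inside the proof of Proposition~\ref{P6} precisely in this local role (to analyze separatrices at $p$); what remains for Proposition~\ref{P7} is a genuinely global step.

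The paper closes this gap by a different route. From Proposition~\ref{P6} it extracts, not just the eigenvalue ratio, but the global dynamical facts that $X$ is complete on $\mathbb{C}^{2}$, of type $\mathbb{C}^{\ast}$, with a single zero $p$ which is the topological boundary of every trajectory. These are exactly the hypotheses of Suzuki's classification of proper holomorphic $\mathbb{C}^{\ast}$-actions on $\mathbb{C}^{2}$ \cite[p.\,530]{Suzuki-anales}, which produces the normal form $rx\,\partial/\partial x+sy\,\partial/\partial y$ up to a global analytic automorphism. In other words, the missing ingredient in your proposal is a global linearization theorem for complete $\mathbb{C}^{\ast}$-flows, and Suzuki's result is precisely that theorem.
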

\begin{proof}

\medskip

By Proposition\,\ref{P6}, $X$ is complete, of type $\mathbb{C}^{\ast}$, and with only one zero that is moreover
the topological boundary in $\mathbb{C}^{2}$ of any trajectory of $X$.
The proof follows from \cite[p.\,530]{Suzuki-anales}.
\end{proof}
According to Proposition\,\ref{P7}, modulo an analytic automorphism of $\mathbb{C}^{2}$, $C_z$ is contained in a level set of $y^{r}/x^{s}$, which is a first integral of $X$.
Therefore, $\overline{C}_{z}=\{y^{r}-ax^{s}=0\}$, being $p=(0,0)$ and $a\neq{0}$.
The condition  $(r,s)=1$ allows us to assume $pr-qs=1$ for $p$, $q\in\mathbb{N}^{+}$. Let us
consider $x=v^{q}u^{r}$ and $y=v^{p}u^{s}$ with $u$, $v\in\mathbb{C}$. It holds
$$y^{r}-ax^{s}=v^{qs+1}u^{sr} - a
v^{qs}v^{rs}=u^{rs}v^{qs}(v-a).$$
Hence, it is enough to take a surjective dominating map from $\mathbb{C}^{2}$ to $\mathbb{C}^{2}\setminus \{v=a\}$, and compose it with
$\gamma:\mathbb{C}^{2}\setminus \{v=a\}\to \mathbb{C}^{2}\setminus
C_{z}$ defined as $(u,v) \mapsto \gamma(u,v)=(v^{q}u^{r},v^{p}u^{s})$, to obtain $\Gamma$.

Therefore, we have finished the proof of Theorem\,\ref{T5}.

\begin{remark}\em
Note that for a proper trajectory $C_{z}$ of type $\mathbb{C}^{\ast}$ such that $\overline{C}_{z}=C_{z}\cup \{p\}$ with $X(p) = 0$,
$\mathbb{C}^{2}\setminus C_{z}$ is not a manifold. Theorem\,\ref{T5} implies that if $\overline{C}_{z}$ is singular, and moreover $X$ is polynomial with $X_{\mid C_{z}}$ complete,
$\mathbb{C}^{2}\setminus C_{z}$ is a holomorphically dominable set by $\mathbb{C}^{2}$. \em
\end{remark}

\bibliographystyle{plain}
\bibliographystyle{amsalpha}

\begin{thebibliography}{10}

\bibitem{Brunella-topology2}
M.\,Brunella.
\newblock \em Sur les courbes int{\'e}grales propres des champs de vecteurs
polynomiaux. \em
\newblock {Topology} {\bf 37}(6): 1229--1246, 1998.


\bibitem{Brunella-impa}
M.\,Brunella.
\newblock \em Birational geometry of foliations. \em  First Latin American Congress of Mathematicians, IMPA, 2000.

\bibitem{Brunella-topology}
M.\,Brunella.
\newblock \em
Complete vector fields on the complex plane. \em
\newblock {\em Topology} {\bf 43}(2): 433--445, 2004.



\bibitem{Bustinduy-indiana}
A.\,Bustinduy.
\newblock \em On the entire solutions of a polynomial vector field on
$\mathbb{C}^2$. \em
\newblock
{Indiana Univ. Math. J.} {\bf 53}: 647--666, 2004.




\bibitem{BL}
G.\,Buzzard and S.~Lu.
\newblock \em
Algebraic surfaces holomorphically dominable by $\mathbb{C}^2$. \em
\newblock {Invent. Math.} {\bf 139} (2000), 617--659.


\bibitem{BL2}
G.\,Buzzard and S.\,Lu.
\newblock \em
Double sections, dominating maps, and the Jacobian fibration. \em
\newblock {Am. J. Math.} {\bf 122} (2000), 1061--1084.



\bibitem{Loray}
F.~Loray.
\newblock
Pseudo-groupe d'une singularit\'e de feuilletage holomorphe en dimension deux.
\newblock {\em Pr\'epublication IRMAR} (2005) http://hal.archives-ouvertes.fr/ccsd-00016434.







\bibitem{Suzuki-japonesa}
M.\,Suzuki.
\newblock \em Propri\'et\'es topologiques des polyn\^omes de deux variables
  complexes, et automorphismes alg\'ebriques de l'espace $\mathbb{C}^{2}$. \em
\newblock {J. Math. Soc. Japan}, {\bf 26} (1974), 241--257.



\bibitem{Suzuki-anales}
M.\,Suzuki.
\newblock \em Sur les op{\'e}rations holomorphes du groupe additif
complexe sur l'espace de deux variables complexes. \em
\newblock {Ann. Sci. {\'E}cole Norm. Sup.} {\bf 10} (1977),
517--546.



\bibitem{Suzuki-springercorto}
M.\,Suzuki.
\newblock \em Sur les op\'erations holomorphes de $\mathbb{C}$\ et de
  $\mathbb{C}^{\ast}$\ sur un espace de {S}tein. \em
\newblock {Lecture Notes in Math.}, {\bf 394} (1978), 80--88.



\bibitem{Lin-zaidenberg}
M.\,G. Za{\u\i}denberg and V.~Ya. Lin.
\newblock \em An irreducible, simply connected algebraic curve in $\mathbb{C}^{2}$
  is equivalent to a quasihomogeneous curve. \em
\newblock {Dokl. Akad. Nauk SSSR}, {\bf  271} (1983), 1048--1052.







\end{thebibliography}
\def\cprime{$'$} \def\cprime{$'$}

\end{document}